\documentclass{article}
\usepackage[margin=1.15in]{geometry}
\usepackage{amsmath,amssymb,amsthm,amsfonts}
\usepackage{graphicx,tikz}
\usetikzlibrary{calc,patterns}
\usepackage{enumerate}
\usepackage{subcaption}
\usepackage[ruled,vlined]{algorithm2e}

\usepackage{listings}
\lstdefinelanguage{Sage}[]{Python}
{morekeywords={False,sage,True},sensitive=true}
\lstset{
  frame=none,
  showtabs=False,
  showspaces=False,
  showstringspaces=False,
  commentstyle={\ttfamily\color{dgreencolor}},
  keywordstyle={\ttfamily\color{dbluecolor}\bfseries},
  stringstyle={\ttfamily\color{dgraycolor}\bfseries},
  language=Sage,
  basicstyle={\fontsize{10pt}{10pt}\ttfamily},
  aboveskip=0.3em,
  belowskip=0.1em,
  numbers=left,
  numberstyle=\footnotesize,
  firstnumber=last
}
\definecolor{dblackcolor}{rgb}{0.0,0.0,0.0}
\definecolor{dbluecolor}{rgb}{0.01,0.02,0.7}
\definecolor{dgreencolor}{rgb}{0.2,0.4,0.0}
\definecolor{dgraycolor}{rgb}{0.30,0.3,0.30}

\newcommand{\RR}{\mathbb{R}}
\newcommand{\ZZ}{\mathbb{Z}}
\newcommand{\HH}{\mathcal{H}}
\newcommand{\FF}{\mathcal{F}}
\newcommand{\LL}{\mathcal{L}}
\newcommand{\bt}{\mathbf{t}}

\DeclareMathOperator{\conv}{conv}
\DeclareMathOperator{\Ad}{Ad}

\newtheorem{theorem}{Theorem}[section]
\newtheorem{corollary}[theorem]{Corollary}
\newtheorem{prop}[theorem]{Proposition}

\theoremstyle{definition}
\newtheorem{defn}[theorem]{Definition}

\theoremstyle{remark}
\newtheorem{remark}[theorem]{Remark}

\theoremstyle{example}
\newtheorem{example}[theorem]{Example}

\usepackage{hyperref}
\renewcommand{\d}{\partial}

\newcommand{\pd}[1]{}

\newcommand{\Addresses}{{
  \bigskip
  \footnotesize

  \textsc{Technische Universit\"at Berlin, Institut f\"ur Mathematik, Stra{\ss}e des 17. Juni 136, 10623 Berlin}\par\nopagebreak
  \textit{E-mail address}: \texttt{buesra.sert@tu-berlin.de}

  \medskip

  \textsc{Fachbereich Mathematik und Informatik, Freie Universit\"at Berlin, Arnimallee 14, 14195 Berlin }\par\nopagebreak
  \textit{E-mail address}: \texttt{niklas.livchitz@gmail.com}

  \medskip

  \textsc{Discrete Geometry Group, Freie Universit\"at Berlin, Arnimallee 2, 14195 Berlin, Germany}\par\nopagebreak
  \textit{E-mail address}: \texttt{w.amy.math@gmail.com}

}}

\title{Combinatorial Methods for Minkowski Tensors of Polytopes}

\author{Niklas Livchitz, B\"u\c{s}ra Sert and Amy Wiebe }

\date{\today}

\begin{document}

\allowdisplaybreaks

\maketitle

\abstract{In this paper we use a generating function approach to record and calculate entries of the Minkowski tensors of a polytope. We focus on ``surface tensors'', extending the methods used in \cite{momentpaper} for moments of the uniform distribution which correspond to volume tensors. In this context we also extend the definition of the adjoint polynomial to the boundary complex of a polytope with simplicial facets. In the case of simplicial polytopes we give an explicit formulation for these surface tensors. 
}

\section{Introduction}

Minkowski tensors were first introduced by McMullen \cite{McMullen_1997} in the context of valuations on convex bodies. They are of theoretical interest in this setting, since it was shown by Alesker \cite{Alesker2, Alesker1} that when multiplied with powers of the metric tensor they span the vector space of isometry covariant, continuous, tensor-valued valutions on the set of convex bodies. Minkowski tensors have also found importance in applications, as they can be used to describe more complex geometric quantities -- beyond scalar values such as volume and surface area -- including shape, orientation, and anisotropy \cite{Aniso}. Minkowski tensors have appeared in the study of such subjects as material sciences \cite{Schroder_Turk_2011}, biosciences \cite{Beisbart_2006}, and digital image analysis \cite{Voronoi}. 

There is also a close connection between these tensors and  moments  of  the  uniform  distribution  on  convex  bodies \cite{Thesis}.  As shown in \cite{momentpaper}, moments on polytopes can be described via rational generating functions and are parametrized by certain algebraic  varieties.   
In this paper we show that the methods employed in \cite{momentpaper} for describing moments,  and hence volume  tensors,  extend to give  a  description  of  surface  tensors. We  compute  explicit   generating  functions for these tensors and  define  an  analog  of  the adjoint polynomial of \cite{Warren_1996} for the boundary complex of a polytope.  Using the generating function we also describe methods for deriving explicit tensors. In particular, we use the method of partial derivatives to extract coefficients, and we note that the especially simple form of the generating function for simplicial polytopes allows us to give a complete description of their surface tensors. 

The remainder of the paper is organized as follows. In Section~\ref{sec:bg} we introduce the necessary background on Minkowski tensors, giving both the general definition for convex bodies and a simplified form for polytopes. In Section~\ref{sec:moments} we describe the connection of Minkowski tensors to moments of a probability measure on a polytope. Here we present results from \cite{momentpaper} on moment generating functions and the adjoint polynomial of a polytope. In Section~\ref{sec:genfn} we extend the methods of the previous section to give a generating function for the Minkowski ``surface tensors'' of a polytope~$P$ which has a closed form rational expression. From this expression, we introduce the {\em surface adjoint} of $P$, which is a generalization of the adjoint polynomial of $P$ to its boundary complex, and discuss its vanishing behavior. In Section~\ref{sec:simplicial}, we extract coefficients of the generating function  and  give  an explicit  description  of  the  surface  tensors  in  terms  of  partial derivatives.  In this section we also give a collection of examples. {In Section~\ref{sec:q} we end with some questions for further exploration. In Appendix~\ref{sec:App} one can find an algorithmic description of our generating function, and an example of how to apply the results of Section~\ref{sec:simplicial} using Sage.}

The authors thank Rainer Sinn for his helpful discussion on early versions of this paper. {They also acknowledge support from the {\em Thematic Einstein Semester on
Algebraic Geometry:
Varieties, Polyhedra, Computation} for the research retreat which facilitated this collaboration.}

\section{Background} \label{sec:bg}

Let $K$ be a convex, compact subset of $\RR^d$ {with nonempty interior} (i.e., a {\em convex body}). We begin by recalling the definition of the intrinsic volumes of $K$. 

\begin{defn}The {\em intrinsic volumes} $V_0(K),\ldots, V_d(K)$ of a convex body are defined by the coefficients of the polynomial in $\RR[\varepsilon]$ on the right hand side of the following
\begin{equation}\lambda(K+\varepsilon B^d) = \sum_{j=0}^d \varepsilon^{d-j}\kappa_{d-j}V_j(K), \label{EQ:Steiner} \end{equation}
where $\lambda$ denotes Lebesgue measure on $\RR^d$ and $\kappa_{d-j}$ is the volume of the unit ball $B^{d-j}$ in $\RR^{d-j}$. 
\end{defn}
Equation \eqref{EQ:Steiner} is known as the Steiner formula. It was proven that the volume of the parallel set (the left hand side above) is a polynomial in $\varepsilon>0$ by Steiner \cite{Steiner} for polytopes and sufficiently smooth surfaces in $\RR^2$ and $\RR^3$. 

The term {\em intrinsic volumes} was coined by McMullen \cite{McMullen_1975} and comes from the fact that the $V_i(K)$ depend only on $K$ and not on the ambient space. In particular, $V_m(K)$ is the $m$-dimensional volume of $K$ if $K$ is contained in an affine subspace of $\RR^d$ of dimension~$m$. So if $K$ is a $d$-dimensional convex body, then $V_d$ is its volume, and moreover, $2V_{d-1}$ is its surface area, and $V_0$ is its Euler characteristic. The intrinsic volumes (up to normalization) are also known as {\em querma\ss integrals} and {\em Minkowski functionals}.  

We will see that the Minkowski tensors are a tensor-valued extension of these scalar-valued instrinsic volumes or Minkowski functionals. That is, a Minkowski tensor $\Phi_j^{r,s}$ will be a map that takes a convex body in $\RR^d$ to a rank $r+s$ tensor over $\RR^d$ such that it coincidences with the intrinsic volume $V_j$ when the rank is $0$. 

Given a symmetric tensor $T$ of rank $r$ over $\RR^d$ (i.e., a symmetric multilinear function of $r$ variables in $\RR^d$), we will identify it (by multilinearity) with the array
$$\{T(e_{i_1},\ldots,e_{i_r})\}_{i_1,\ldots,i_r=1}^d$$
of its components, where $\{e_i\}_{i=1}^d$ is the standard basis of $\RR^d$. 
Now identifying $\RR^d$ with its dual via the inner product (so that $x\in\RR^d$ is a linear functional on $\RR^d$), we write $x^r$ for the $r$-fold symmetric tensor of $x\in\RR^d$. We then identify the tensor $x^r$ with its array of components
$$\{x_{i_1}\cdots x_{i_r}\}_{i_1,\ldots, i_r=1}^d.$$

\begin{remark}We use the convention common in the literature on Minkowski tensors to refer to the number of inputs $r$ to a tensor as its {\em rank}. This value is also sometimes refered to as the {\em order} of a tensor, and it is important to note that this is different from the rank defined as the minimum $n$ such that the tensor can be written as a sum of $n$ simple tensors. Under the latter definition, $x^r$ is always rank $1$ for $x\neq 0\in\RR^d$. 
\end{remark}

\begin{example} If $x = (x_1,x_2,x_3)\in\RR^3$, then the rank $2$ tensor $x^2$ can be written
\begin{align*}
x^2 := x\otimes x & = (x_1e_1+x_2e_2+x_3e_3)\otimes(x_1e_1+x_2e_2+x_3e_3) \\
& \equiv \left[\begin{array}{ccc} 	
			x_1x_1 & x_1x_2 & x_1x_3 \\
			x_2x_1 & x_2x_2 & x_2x_3 \\
			x_3x_1 & x_3x_2 & x_3x_3	\end{array}\right].
\end{align*}
\end{example}

Now to define the desired tensor-valued functions we start with
 a local version of the Steiner formula~\eqref{EQ:Steiner}. Given a point $x\in\RR^d$, let $p(K,x)$ denote the (unique) nearest point to $x$ in $K$. Then 
$$u(K,x) = \frac{x-p(K,x)}{||x-p(K,x)||}$$
is the unit outer normal of $K$ at $p(K,x)$. Let $S^{d-1}$ be the unit sphere in $\RR^d$. For $\varepsilon>0$ and Borel set $A = \beta\times\omega\subset\RR^d\times S^{d-1}$, the volume of the local parallel set 
$$M_\varepsilon(K,A) = \left\{x\in(K+\varepsilon B^d)\setminus K : \left(p(K,x),u(K,x)\right)\in A\right\}$$
is a polynomial in $\varepsilon$ of degree at most $d-1$; that is,
\begin{equation}M_\varepsilon(K,A) = \sum_{j=0}^{d-1} \varepsilon^{d-j}\kappa_{d-j} \Lambda_j(K,A).\label{EQ:local_Steiner}\end{equation}

\begin{defn} The coefficients $\Lambda_j(K,\cdot)$ in \eqref{EQ:local_Steiner} above define the {\em support measures} of~$K$. Further, define $\Lambda_j(\emptyset,\cdot) = 0$ for $j=0,\ldots, d$.
\end{defn}

\begin{figure}[ht]
\begin{subfigure}{.4\textwidth}

 \begin{center}
\begin{tikzpicture}[xscale=1.2, yscale=1.2]
  \coordinate (p1) at (0,0) {};
  \coordinate (p2) at (1,2) {};
  \coordinate (p3) at (3,2) {};
  \coordinate (p4) at (4,0) {};
  \coordinate (p5) at (0.3,0) {};
  \coordinate (p6) at (-0.3,0) {};
  \coordinate (p7) at (1.3,2) {};
  \coordinate (p8) at (0.7,2) {};
  \coordinate (p9) at (3.3,2) {};
  \coordinate (p10) at (2.7,2) {};
  \coordinate (p11) at (4.3,0) {};
  \coordinate (p12) at (3.7,0) {};
  \coordinate (p13) at (0,0.3) {};
  \coordinate (p14) at (0,-0.3) {};
  \coordinate (p15) at (4,0.3) {};
  \coordinate (p16) at (4,-0.3) {};
  \coordinate (p17) at (3,2.3) {};
  \coordinate (p18) at (3,1.7) {};
  \coordinate (p19) at (1,2.3) {};
  \coordinate (p20) at (1,1.7) {};
  \coordinate (p21) at (0.3,0.3) {};
  \coordinate (p22) at (1.3,1.7) {};
  \coordinate (p23) at (3.3,1.7) {};
  \coordinate (p24) at (3.7,0.3) {};
  \coordinate (p25) at (-0.3,0.3) {};
  \coordinate (p26) at (0.7,1.7) {};
  \coordinate (p27) at (2.7,1.7) {};
  \coordinate (p28) at (4.3,0.3) {};

  \draw [ fill=pink, color=pink, draw=gray, thick] (p1) -- (p2) -- (p3) -- (p4) -- cycle;
  \draw [fill=yellow, color=yellow, opacity=0.5] (p1) circle [radius=0.3];
  \draw [fill=yellow, color=yellow,opacity=0.5] (p2) circle [radius=0.3];
  \draw [fill=yellow, color=yellow,opacity=0.5] (p4) circle [radius=0.3];
  \draw[fill=yellow, color=yellow,opacity=0.5] (p6) -- (p5) -- (p7) -- (p8) -- (p6) ;
  \draw[fill=yellow, color=yellow,opacity=0.5] (p10) -- (p9) -- (p11) -- (p12) -- (p10) ;
  \draw[fill=yellow, color=yellow,opacity=0.5] (p13) -- (p14) -- (p16) -- (p15) -- (p13) ;
  \draw[fill=yellow, color=yellow,opacity=0.5] (p19) -- (p20) -- (p18) -- (p17) -- (p19) ;
  \draw [fill=yellow, color=yellow, draw=orange, thick, opacity=0.7] (p3) circle [radius=0.3];
  \draw [color=orange, thick,opacity=0.5] (p3) -- (p17);
  \node (e) [left] at ($ (p3)!0.28!(p17)$) {$\epsilon$};
  \draw [draw=orange, thick, opacity=0.7] (2,0) circle [radius=0.3];
  \node (ee) [left] at (2.1, -0.2) {$\epsilon$};
  \draw [color=orange, thick,opacity=0.5] (2,0) -- (2,-0.3);
  \node (P) at (2,1) {$P$};

\end{tikzpicture}
\end{center}

  \caption{Parallel set}
  \label{FIG:sub-parallel}
\end{subfigure}
\begin{subfigure}{.6\textwidth}

 \begin{center}
\begin{tikzpicture}[xscale=1.1, yscale=1.1]
  \coordinate (p1) at (0,0) {};
  \coordinate (p2) at (1,2) {};
  \coordinate (p3) at (3,2) {};
  \coordinate (p4) at (4,0) {};
  \coordinate (p5) at (0.3,0) {};
  \coordinate (p6) at (-0.3,0) {};
  \coordinate (p7) at (1.3,2) {};
  \coordinate (p8) at (0.7,2) {};
  \coordinate (p9) at (3.3,2) {};
  \coordinate (p10) at (2.7,2) {};
  \coordinate (p11) at (4.3,0) {};
  \coordinate (p12) at (3.7,0) {};
  \coordinate (p13) at (0,0.3) {};
  \coordinate (p14) at (0,-0.3) {};
  \coordinate (p15) at (4,0.3) {};
  \coordinate (p16) at (4,-0.3) {};
  \coordinate (p17) at (3,2.3) {};
  \coordinate (p18) at (3,1.7) {};
  \coordinate (p19) at (1,2.3) {};
  \coordinate (p20) at (1,1.7) {};
  \coordinate (p21) at (0.3,0.3) {};
  \coordinate (p22) at (1.3,1.7) {};
  \coordinate (p23) at (3.3,1.7) {};
  \coordinate (p24) at (3.7,0.3) {};
  \coordinate (p25) at (-0.3,0.3) {};
  \coordinate (p26) at (0.7,1.7) {};
  \coordinate (p27) at (2.7,1.7) {};
  \coordinate (p28) at (4.3,0.3) {};

  \draw [ fill=pink, color=pink, draw=gray, thick] (p1) -- (p2) -- (p3) -- (p4) -- cycle;
  \path[fill=orange, opacity=0.3, rounded corners=20pt ] (-1,4) -- (2,3.5) --(4,3.8) -- (3.3,2.5) -- (3,0.4) -- (2,1.3) -- (0, 1.7) -- cycle;
  \node (B) at (0,3) {$\beta$};
  
  \draw [] (5,1)--(5,3);
  \draw[] (4,2) --(6,2);
  \node(o) at (5,2){};
  \draw[thick, color=magenta] ($(o)+(0,0.7)$) arc[radius=0.7, start angle=90, end angle=180];
  \draw[thick, color=black] ($(o)+(0,0.7)$) arc[radius=0.7, start angle=90, end angle=-180];
  \node[color=magenta](w) at (4.2,2.5) {$\omega$};
   \node[color=black](s) at (6,2.8) {$S^{1}$};
\draw[fill=yellow, color=yellow,opacity=0.7] (p2) -- ($(p2)+(-0.42,0.02)$) -- ($(p1)+(0.37,1.67)$) -- ($(p1)+(0.75,1.57)$) -- (p2) ;

  \draw[fill=yellow, color=yellow,opacity=0.7] (p2) -- ($(p2)+(0,0.4)$) -- ($(p3)+(0,0.4)$) -- (p3) -- (p2) ;
  \filldraw[thick, color=yellow] ($(p2)+(0,0.4)$) arc[radius=0.4, start angle=90, end angle=180] -- (p8) -- (p2);
  \node [] (m) at (2, 2.2) {${M_{\epsilon}(P,\beta\times\omega)}$};
  
  \node (P) at (2,1) {$P$};
\end{tikzpicture}
\end{center}

  \caption{Local parallel set for $A=\beta\times \omega$}
  \label{FIG:sub-local}
\end{subfigure}
\caption{Parallel and local parallel sets for a polytope $P$.}
\label{FIG:parallel_sets}
\end{figure}
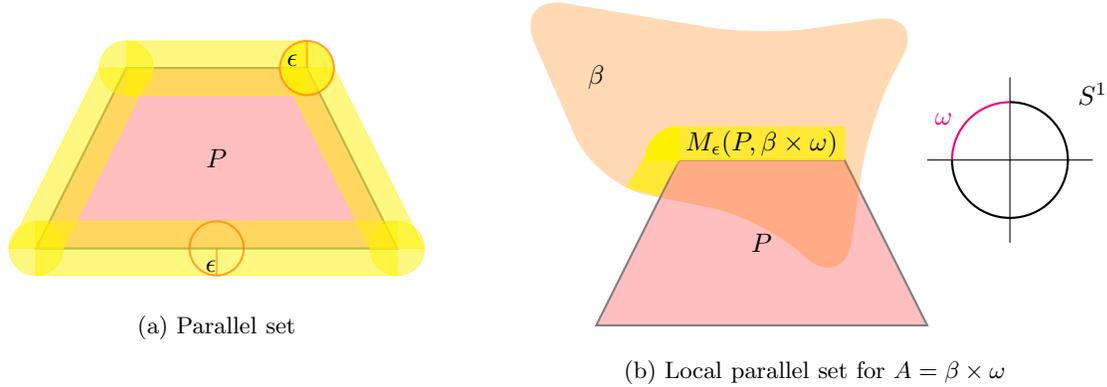

Notice that if $A=\RR^d\times S^{d-1}$, then $\Lambda_j(K,A) = V_j(K)$. Furthermore, in this case, the points $p(K,x),u(K,x)\in A$ will range over the entire {\em normal cycle} of $K$; that is, the set of all pairs $(p,u)$ such that $p$ is a point on the boundary of $K$ and $u$ is a unit normal to $K$ at $p$. 

We 
are now able to define 
the main objects we are interested in, the Minkowski tensors.

\begin{defn}
For a $d$-dimensional convex body $K$, and integers $r,s\geq 0$, the {\em Minkowski tensors} of $K$ are 
\begin{equation}
\Phi_j^{r,s}(K) = \frac{\omega_{d-j}}{r!s!\,\omega_{d-j+s}}\int_{\RR^d\times S^{d-1}} x^ru^s \Lambda_j(K,d(x,u))
\label{EQ:the_tensors} \end{equation}
for $j=0,\ldots,d-1$, where $\omega_j$ is the surface area of the unit ball in $\RR^j$, and
\begin{equation}
\Phi_d^{r,0}(K) = \frac{1}{r!}\int_K x^r\lambda(dx).
\label{EQ:vol_tensor} \end{equation}
For other choices of $r,s,j$ define $\Phi_j^{r,s}=0$. 
The tensors in \eqref{EQ:vol_tensor} are called {\em volume tensors}. For $j=d-1$ and $r=0$, the tensors in \eqref{EQ:the_tensors} are called {\em surface tensors}. 
\end{defn}

Recall that since $x^r = x^{\otimes r}$ and $u^s = u^{\otimes s}$, $\Phi^{r,s}_j(K)$ is a tensor of rank $r+s$. 
Then notice that for $j=0,\ldots,d$, we have the rank $0$ tensor $\Phi_j^{0,0} = V_j$, so that these tensors are extensions of the intrinsic volume as noted earlier. Other volume tensors also have well-known physical interpretations. For instance, $\Phi^{1,0}_d$, after normalization by the volume of $K$ is the center of gravity, and $\Phi^{2,0}_d$ is  related to the tensor of inertia (see for example, \cite[Section 1.3]{Thesis},\cite{Aniso}).

Restricting to the case of polytopes, $K=P$, it is not hard to see that the surface measures will depend on the faces of $P$ and their normal cones. Let $\FF_j(P)$ denote the set of $j$-dimensional faces of the polytope $P$, and for any face $F$ of $P$, denote by $N(F,P)$, the outer normal cone of $F$.  (See Figure~\ref{FIG:normals}.)

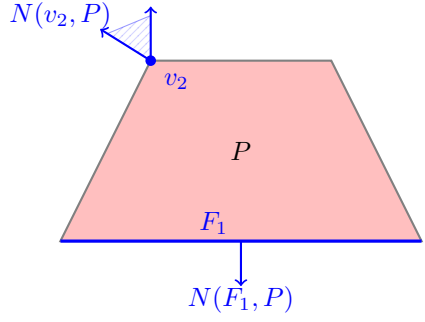
\begin{figure}
\centering
\begin{tikzpicture}[xscale=1.2, yscale=1.2]
  \coordinate (p1) at (0,0) {};
  \coordinate (p2) at (1,2) {};
  \coordinate (p3) at (3,2) {};
  \coordinate (p4) at (4,0) {};
  \coordinate (p5) at (0.3,0) {};
  \coordinate (p6) at (-0.3,0) {};
  \coordinate (p7) at (1.3,2) {};
  \coordinate (p8) at (0.7,2) {};
  \coordinate (p9) at (3.3,2) {};
  \coordinate (p10) at (2.7,2) {};
  \coordinate (p11) at (4.3,0) {};
  \coordinate (p12) at (3.7,0) {};
  \coordinate (p13) at (0,0.3) {};
  \coordinate (p14) at (0,-0.3) {};
  \coordinate (p15) at (4,0.3) {};
  \coordinate (p16) at (4,-0.3) {};
  \coordinate (p17) at (3,2.3) {};
  \coordinate (p18) at (3,1.7) {};
  \coordinate (p19) at (1,2.3) {};
  \coordinate (p20) at (1,1.7) {};
  \coordinate (p21) at (0.3,0.3) {};
  \coordinate (p22) at (1.3,1.7) {};
  \coordinate (p23) at (3.3,1.7) {};
  \coordinate (p24) at (3.7,0.3) {};
  \coordinate (p25) at (-0.3,0.3) {};
  \coordinate (p26) at (0.7,1.7) {};
  \coordinate (p27) at (2.7,1.7) {};
  \coordinate (p28) at (4.3,0.3) {};
  
  \draw [ fill=pink, color=pink, draw=gray, thick] (p1) -- (p2) -- (p3) -- (p4) -- cycle;

     \node (P) at (2,1) {$P$};
     \draw[->,blue, thick] (p2) -- ($(p2)+(-0.56,0.35)$);
     \draw[->,blue, thick] (p2) -- ($(p2)+(0,0.6)$);
     \draw[blue, pattern=north east lines, pattern color=blue, opacity=0.4] ($(p2)+(0,0.5)$) -- ($(p2)+(-0.48,0.3)$) -- (p2) -- cycle;
     \node [color=blue] at ($(p2)+(-1,0.5)$) {$N(v_2,P)$};
     \node [fill, color=blue, shape=circle, minimum size = 4pt, inner sep = 0pt, label={-45}:\textcolor{blue}{$v_2$}] at (p2) {};
     \draw[->, thick, blue] (2,0) -- (2,-0.5);
     \draw[very thick, blue] (p1) -- (p4);
     \node [color=blue] at (1.7, 0.2) {$F_1$};
     \node [color=blue] at (2,-0.65) {$N(F_1,P)$};
\end{tikzpicture}
\caption{Normal cones of faces of a polytope.}
\label{FIG:normals}
\end{figure}

Then we get the following description of the surface measures of a polytope
\begin{equation}
\Lambda_j(P,A) = \frac{1}{\omega_{d-j}}\sum_{F\in\FF_j(P)}\int_F\int_{N(F,P)\cap S^{d-1}} \mathbf{1}_A(x,u)\HH^{d-j-1}(du)\HH^j(dx),
\label{EQ:polytope_measures}\end{equation}
where $\HH^j$ is $j$-dimensional Hausdorff measure and $\mathbf{1}_A$ is the indicator function for set~$A$. 

{
\begin{example} 
One can gain some intuition regarding \eqref{EQ:polytope_measures} by picturing the following example:
\begin{center} 
\begin{tikzpicture}[xscale=1.2, yscale=1.2]
  \coordinate (p1) at (0,0) {};
  \coordinate (p2) at (0,2) {};
  \coordinate (p3) at (3,2) {};
  \coordinate (p4) at (3,0) {};
  \coordinate (p5) at (0.3,0) {};
  \coordinate (p6) at (-0.3,0) {};
  \coordinate (p7) at (0.3,2) {};
  \coordinate (p8) at (0.7,2) {};
  \coordinate (p9) at (3.3,2) {};
  \coordinate (p10) at (2.7,2) {};
  \coordinate (p11) at (3.3,0) {};
  \coordinate (p12) at (3.7,0) {};
  \coordinate (p13) at (0,0.3) {};
  \coordinate (p14) at (0,-0.3) {};
  \coordinate (p15) at (3,0.3) {};
  \coordinate (p16) at (3,-0.3) {};
  \coordinate (p17) at (3,2.3) {};
  \coordinate (p18) at (3,1.7) {};
  \coordinate (p19) at (0,2.3) {};
  \coordinate (p20) at (0,1.7) {};
  \coordinate (p21) at (0.3,0.3) {};
  \coordinate (p22) at (1.3,1.7) {};
  \coordinate (p23) at (2.3,1.7) {};
  \coordinate (p24) at (2.7,0.3) {};
  \coordinate (p25) at (-0.3,0.3) {};
  \coordinate (p26) at (0.7,1.7) {};
  \coordinate (p27) at (2.7,1.7) {};
  \coordinate (p28) at (3.3,0.3) {};
  \draw [ fill=pink, color=pink, draw=gray, thick] (p1) -- (p2) -- (p3) -- (p4) -- cycle;
  \path[fill=orange, opacity=0.3, rounded corners=20pt ] (-1,3) -- (3.2,3.5) --(4.5,3.8) -- (3.8,2.5) -- (3.5,-0.4) -- (2.7,1.3) -- (1, 1) -- cycle;
  \node (B) at (0,2.8) {$\beta$};
  
  \draw [] (7,1)--(7,3);
  \draw[] (6,2) --(8,2);
  \node(o) at (7,2){};
  \draw[thick, color=magenta] ($(o)+(0,0.7)$) arc[radius=0.7, start angle=90, end angle=0];
  \draw[thick, color=black] ($(o)+(0,0.7)$) arc[radius=0.7, start angle=90, end angle=360];
 \node[color=magenta](w) at (8.2,2.5) {$\omega$};
   \node[color=black](s) at (6,2.8) {$S^{1}$};
  
   \draw[fill=yellow, color=yellow,opacity=0.7] (p2) -- ($(p2)+(0,0.4)$) -- ($(p3)+(0,0.4)$) -- (p3) -- (p2) ;
  \filldraw[thick, color=yellow] ($(p3)+(0,0.4)$) arc[radius=0.4, start angle=90, end angle=0] -- (p9) -- (p3);
 
  \node (P) at (2,1) {$P$};
  \draw [] (7,1)--(7,3);
  \draw[] (6,2) --(8,2);
  \node(o) at (6,2){};
  \draw[fill=yellow, color=yellow,opacity=0.6] (p3) -- ($(p3)+(0.4,0)$) -- ($(p4)+(0.4,0.62)$) -- ($(p4)+(0,0.62)$) -- (p3) ;
	\draw[->, thick,blue] ($(p3)+(0.5,0.9)$) -- ($(p3)+(0.5,1.4)$);
	\draw[->,thick, blue] ($(p3)+(0.5,0.9)$) -- ($(p3)+(1,0.9)$);
    \draw[blue, pattern color=blue, pattern = north west lines, opacity=0.4] ($(p3)+(0.5,0.9)$) -- ($(p3)+(0.5,1.35)$) -- ($(p3)+(0.95,0.9)$) -- cycle;
    \draw[->, thick,blue] ($(p3)+(0,-1)$) -- ($(p3)+(0.4,-1)$);
     \draw[->, thick,blue] (1.5, 2) -- (1.5,2.4);
      \draw[dotted,thick,magenta] (p3) edge node[left]{\color{magenta}\footnotesize $\varepsilon$} ($(p3)+(0,0.42)$);
     \draw[dotted, thick, magenta] (p3) edge node[below]{\color{magenta}\footnotesize $\varepsilon$} ($(p3)+(0.42,0)$);
     \node [color=blue] at ( $(p3)+(1,0.5)$) {\footnotesize $N(v_{23},P)$};
      \node[above, color=blue]  at (1.7, 2.3) {\footnotesize $N(F_{2},P)$};
     \node[right, color=blue]  at ($(p3)+(0.4,-1)$) {\footnotesize $N(F_3,P)$};
     
\end{tikzpicture}
\end{center}
where we can see that the volume of the local parallel set is $\varepsilon$ times the $1 = d-1$ dimensional volume of the facets (1-dimensional faces) of $P$ which intersect $\beta$ and whose normals intersect $\omega$. This gives us the two rectangles, then we add the rounded corner, whose volume will be $\varepsilon^2$ times a quantity involving the $0$-dimensional volume of vertex ($0$-dimensional face) $v$ and the $1$-dimensional measure of the normal cone of $v$ intersected with $\omega\in S^1$. 
\end{example}
}

So for {a $d$-dimensional polytope $P$}, we get Minkowski tensors of the form 
\begin{equation}
\Phi_j^{r,s}(P) = \frac{1}{r!s!\,\omega_{d-j+s}}\int_{\RR^d\times S^{d-1}}
\raisebox{1pt}{ \hspace{-17pt}$x^ru^s$} \sum_{F\in\FF_j(P)}\int_{F}\int_{N(F,P)\cap S^{d-1}} \raisebox{1pt}{ \hspace{-35pt} $\mathbf{1}_{d(x,u)}(x,u)\HH^{d-j-1}(du)\HH^j(dx)$}
\label{EQ:polytope_tensors} \end{equation}
for $j=0,\ldots,d-1$ and
\begin{equation}
\Phi_d^{r,0}(P) = \frac{1}{r!}\int_P x^r\lambda(dx).
\label{EQ:poly_vol_tensor} \end{equation}

\begin{example}\label{EX:quad}
Let $P$ be a quadrilateral with vertices $(0,0), (a,0), (a,b)$, and $(0,b)$. Then its volume tensors are of the form
\begin{align*}
\Phi_2^{r,0}(P) & =\frac{1}{r!}\int_{0}^a\int_0^b (x_1e_1+x_2e_2)^r dx_2dx_1 \\
		& = \frac{1}{r!}\int_0^a\int_0^b \sum_{k=0}^r \binom{r}{k} x_1^kx_2^{r-k}(e_1^k\otimes e_2^{r-k}) dx_2dx_1 \\
		& = \frac{1}{r!}\sum_{k=0}^r\binom{r}{k} \int_0^a \frac{1}{r-k+1}x_1^kb^{r-k+1}(e_1^k\otimes e_2^{r-k}) dx_1 \\
		& = \frac{1}{r!}\sum_{k=0}^r\binom{r}{k} \frac{1}{(k+1)(r-k+1)} a^{k+1}b^{r-k+1}(e_1^k\otimes e_2^{r-k}).
\end{align*}
That is $\Phi_2^{0,0} = V_2(P) = ab$, $\Phi_2^{1,0} = \frac{1}{2}ab(a,b)^\top$, and {$\Phi_2^{2,0} = \frac{1}{24}ab\begin{pmatrix}
4a^2 & 3ab \\ 3ab & 4b^2
\end{pmatrix}$.} 
And letting $u_F$ be the outer normal to facet $F$ of $P$, we have that the surface tensors are 
\begin{align*}
\Phi_1^{0,s}(P) & = \frac{1}{s!\,\omega_{1+s}}\int_{\RR^2\times S^{1}}
\raisebox{1pt}{ \hspace{-17pt}$u^s$} \sum_{F\in\FF_1(P)}\int_{F}\int_{N(F,P)\cap S^{1}} \raisebox{1pt}{ \hspace{-30pt} $\mathbf{1}_{d(x,u)}(x,u)\HH^{0}(du)\HH^1(dx)$} \\
		& = \frac{1}{s!\,\omega_{1+s}}\int_{\RR^2\times S^{1}}
\raisebox{1pt}{ \hspace{-17pt}$u^s$} \sum_{F\in\FF_1(P)}\int_{F}{\mathbf{1}_{d(x,u)}(x)}\delta_{u_F}(u)\HH^1(dx) \\
		& = \frac{1}{s!\,\omega_{1+s}} \int_{S^1}u^s \sum_{F\in \FF_1(P)} V_1(F)\delta_{u_F}(u) \\
		& = \frac{1}{s!\,\omega_{1+s}} \sum_{F\in \FF_1(P)} V_1(F)u_F^s.
\end{align*}
That is, 
\begin{align*}
\Phi_1^{0,0}(P) & = \frac{1}{\omega_1}(a+b+a+b) \hspace{64pt} = a+b \\
\Phi_1^{0,1}(P) & = \frac{1}{\omega_2}\left(a(-e_2)+be_1+ae_2+b(-e_1)\right) = 0\\
\Phi_1^{0,2}(P) & = \frac{1}{2!\,\omega_3}\left(a(-e_2\otimes -e_2)+b(e_1\otimes e_1)+a(e_2\otimes e_2)+b(-e_1\otimes -e_1)\right) \\
	& = \frac{1}{8\pi}\left(\begin{array}{cc} a & 0 \\ 0 & b \end{array}\right).
\end{align*}
{We note that the computation of $\Phi^{2,0}_1(P)$ using the software Sage is given in Appendix~\ref{sec:sage}.}
\end{example}

Following calculations similar to the above example, we consider the surface tensors of~$P$, and show how they relate to the tensors of the facets of $P$. Let $u_F$ denote the outer unit normal to facet $F$ of $P$. Then 
\begin{align}
\Phi_{d-1}^{r,s}(P) & = \frac{1}{r!s!\,\omega_{1+s}}\int_{\RR^d\times S^{d-1}}
\raisebox{1pt}{ \hspace{-15pt}$x^ru^s$} \sum_{F\in\FF_{d-1}(P)}\int_{F}\int_{N(F,P)\cap S^{d-1}} \raisebox{1pt}{ \hspace{-35pt} $\mathbf{1}_{d(x,u)}(x,u)\HH^{0}(du)\HH^{d-1}(dx)$} \nonumber\\
		& = \frac{1}{r!s!\,\omega_{1+s}}\int_{\RR^d\times S^{d-1}}
\raisebox{1pt}{ \hspace{-15pt}$x^ru^s$} \sum_{F\in\FF_{d-1}(P)}\int_{F} \raisebox{1pt}{ $\mathbf{1}_{d(x,u)}\delta_{u_F}(u) \HH^{d-1}(dx)$}\nonumber \\
		& = \frac{1}{r!s!\,\omega_{1+s}}\int_{\RR^d} x^r\sum_{F\in\FF_{d-1}(P)}\left( \int_F{\mathbf{1}_{d(x,u)}}(x,u_F)\HH^{d-1}(dx) \right)\otimes u_F^s \nonumber\\
		& = \frac{1}{s!\,\omega_{1+s}}\sum_{F\in\FF_{d-1}(P)}\left( \frac{1}{r!}\int_Fx^r\HH^{d-1}(dx) \right)\otimes u_F^s \nonumber\\
		& = \frac{1}{s!\,\omega_{1+s}}\sum_{F\in\FF_{d-1}(P)} \Phi_{d-1}^{r,0}(F)u_F^s, \label{EQ:poly_surface_tensors}
\end{align}
where we use the fact that the $d-1$ support measure is only nonzero on the part of the normal cycle of the $P$ of the form $(x,u_F)$ where $x\in F$ for facets $F$ of $P$. Hence the integration over $\RR^d$ is really an integration over each facet which gives us that facet's ($(d-1)$-dimensional) volume tensors. 

\section{Adjoint polynomial} \label{sec:moments}

We have already seen that certain volume tensors are related to frequently studied moments of a convex body such as its center of mass and moment of inertia. {As we will explain below, one can see} that the volume tensors are in general rescalings of the moments of a convex body. 

For an $d$-dimensional polytope $P \subset \RR^d$, let $\mu_P$ denote the uniform probability measure on $P$. 
\begin{defn} The {\em moments} $m_I$ of the distribution $\mu_P$ are the expected values of the monomials $x^I$. That is, 
\begin{equation} m_{i_1\ldots i_d} = \int_{\RR^d} x_1^{i_1}\cdots x_d^{i_d} d\mu_P, \quad i_1,\ldots, i_d\in\ZZ_{\geq 0}. \label{EQ:moments} \end{equation}
\end{defn}

Notice that the uniform probability distribution on $P$ has probabilty density function $f_P(x) = \frac{1}{V_d(P)}$ for $x\in P$ and $f_P(x)=0$ elsewhere, so that 
\begin{equation}
m_{i_1\ldots i_d} = \frac{1}{V_d(P)}\int_P x_1^{i_1}\cdots x_d^{i_d}\, \lambda(dx).
\label{EQ:moments2}\end{equation}

{Using basic multinomial expansion to write the rank $r$ tensor $x^r$ for $x\in\RR^d$ in terms of tensors in the standard basis $\{e_i\}_{i=1}^d$, we see each coefficient is some monomial that we would integrate to get an $r$th moment, namely
$$x^r = (x_1e_1+\cdots x_de_d)^r = \sum_{i_1+\cdots+i_d = r} \binom{r}{i_1,\ldots,i_d} x_1^{i_1}\cdots x_d^{i_d}(e_1^{i_1}\otimes\cdots\otimes e_d^{i_d}).$$
Notice that the multinomial coefficient counts the number of components of the tensor in array notation that will have the same value by symmetry.}
 
Now fix an integer $r>0$ and let $\{i_1,\ldots,i_d\}$ range over the set of partitions of $r$ of size $d$; that is, $i_1+\cdots+i_d = r$ and $i_1,\ldots, i_d\in\ZZ_{\geq 0}$. Then using \eqref{EQ:moments2} we can reinterpret the Minkowski volume tensor of $P$ as recording the collection of $r$th moments of $\mu_P$.
\begin{prop} For a $d$-dimensional polytope $P$, the rank $r$ Minkowski volume tensor and the $r$th moments of the uniform distribution are related as follows. 
\begin{align}
\Phi_d^{r,0}(P) & = \frac{1}{r!}\int_P x^r\lambda(dx) \nonumber\\
		& = \frac{1}{r!} \int_P \sum_{i_1+\cdots+i_d=r} \frac{r!}{i_1!\cdots i_d!}(x_1^{i_1}\cdots x_d^{i_d})(e_1^{i_1}\otimes\cdots\otimes e_d^{i_d}) \lambda(dx) \nonumber\\ 
		& = V_d(P)  \sum_{i_1+\cdots+i_d=r} \frac{1}{i_1!\cdots i_d!}m_{i_1\ldots i_d}(e_1^{i_1}\otimes\cdots\otimes e_d^{i_d}). \label{EQ:tensor_as_moments}
 \end{align}
 \label{PROP:tensor_as_moments} \end{prop}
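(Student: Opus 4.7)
The proposition is essentially an index-bookkeeping calculation, so my plan is to chain together three ingredients that are already on the table: the definition \eqref{EQ:poly_vol_tensor} of the volume tensor, the multinomial expansion of $x^r$ displayed just before the statement, and the integral expression \eqref{EQ:moments2} for the moments $m_{i_1\ldots i_d}$.

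First I would write
\[
\Phi_d^{r,0}(P) = \frac{1}{r!}\int_P x^r\,\lambda(dx),
\]
which is just \eqref{EQ:poly_vol_tensor}. Next I would substitute the multinomial expansion
\[
x^r = \sum_{i_1+\cdots+i_d=r} \binom{r}{i_1,\ldots,i_d} x_1^{i_1}\cdots x_d^{i_d}\,(e_1^{i_1}\otimes\cdots\otimes e_d^{i_d}),
\]
which was derived in the paragraph preceding the proposition. Because the basis tensors $e_1^{i_1}\otimes\cdots\otimes e_d^{i_d}$ are constant and the sum is finite, I can pull them and the sum outside the integral by linearity, giving
\[
\Phi_d^{r,0}(P) = \frac{1}{r!}\sum_{i_1+\cdots+i_d=r}\frac{r!}{i_1!\cdots i_d!}\left(\int_P x_1^{i_1}\cdots x_d^{i_d}\,\lambda(dx)\right)(e_1^{i_1}\otimes\cdots\otimes e_d^{i_d}).
\]
The $r!$ factors then cancel.

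Finally, I would invoke \eqref{EQ:moments2}, which rearranges to $\int_P x_1^{i_1}\cdots x_d^{i_d}\,\lambda(dx) = V_d(P)\,m_{i_1\ldots i_d}$, and substitute this into each summand to get the claimed formula \eqref{EQ:tensor_as_moments}. There is really no hard step here; the only thing to be careful about is keeping track of the multinomial coefficient and confirming that the $r!$ from the definition of the tensor cancels it down to the $1/(i_1!\cdots i_d!)$ appearing in the statement. In particular, the proof is formal and does not use any special property of $P$ beyond the fact that it is a bounded measurable set with positive $d$-dimensional volume (so that $\mu_P$ is well-defined).
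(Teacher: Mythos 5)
Your proposal is correct and follows the same route the paper takes: the proposition's three displayed lines are themselves the proof (definition of $\Phi_d^{r,0}$, multinomial expansion of $x^r$, then substitution of the moment formula \eqref{EQ:moments2}), and your argument reproduces exactly this chain with the same cancellations.
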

 
\begin{example}
 Consider the polytope $P=\conv\left\{(0,0),(2,2),(3,2), (5,0) \right\}$. 
\begin{center}
    \begin{tikzpicture}
      \draw[color=orange, fill=orange, opacity=0.6](0,0) -- (2,2) -- (3,2) -- (5,0) -- cycle;
      \node () at (2.5,1){$P$};
    \end{tikzpicture}
\end{center}
\begin{align*}
\Phi_{2}^{1,0} & = \int_{P}x\,\lambda(dx) \\ 
	& =\int_{P}\left(\begin{array}{@{\,}c@{\;\;}c@{\,}}x_{1}&x_{2}\end{array}\right)^\top dx_{2}dx_{1} \\
	& =\int_{0}^{2}\!\!\int_{0}^{x_{1}}\left(\begin{array}{@{\,}c@{\;\;}c@{\,}}x_{1}&x_{2}\end{array}\right)^\top dx_{2}dx_{1}
	+\int_{3}^{5}\!\!\int_{0}^{-x_{1}+5}\hspace{-10pt}\left(\begin{array}{@{\,}c@{\;\;}c@{\,}}x_{1}&x_{2}\end{array}\right)^\top dx_{2}dx_{1} 
	+\int_{2}^{3}\!\!\int_{0}^{2}\left(\begin{array}{@{\,}c@{\;\;}c@{\,}}x_{1}&x_{2}\end{array}\right)^\top dx_{2}dx_{1} \\
	& =\left(\begin{array}{@{\,}c@{\;\;}c@{\,}} 15& \frac{14}{3} \end{array} \right) \\
	& =6\left(\begin{array}{@{\,}c@{\;\;}c@{\,}}m_{10}& m_{01}\end{array}\right)
\end{align*}
Considering the above calculations componentwise, it is clear that we are calculating the first moments of the distribution of $\mu_{P}$. From these moments we get the center of mass, which is $(m_{10},m_{01}) = (\frac{5}{2},\frac{7}{9})$. These moments are simply the components of our tensors normalized by $V(P)=6$.
\end{example}
 
\begin{example} Recall for $P=\conv\{(0,0),(a,0),(a,b),(0,b)\}$, we have $\Phi_2^{1,0} = \frac{1}{2}ab(a,b)^\top$. So that $m_{10} = \frac{1}{2}a, m_{01}=\frac{1}{2}b$, since $V_2(P)=ab$. This is, of course, also consistent with the fact that the first moment should represent the expected value or center of mass. 
\end{example}

In \cite{momentpaper} the following closed form rational generating function for the moments of a simplicial polytope $P$ is given. 

\begin{theorem}[Theorem 2.2 \cite{momentpaper}] The normalized moment generating function for the uniform probability distribution $\mu_P$ on simplicial polytope $P = \conv\{x_1,\ldots,x_m\}$ {with each $x_k = (x_{k,1},\ldots,x_{k,d})\in\RR^d$} is 
\begin{multline}
\sum_{i_1,\ldots,i_d\in\mathbb{N}}\frac{(i_1+\cdots+i_d+d)!}{i_1!\cdots i_d!d!}m_{i_1\ldots i_d} t_1^{i_1}\cdots t_d^{i_d} \\ 
= \frac{1}{V_d(P)}\sum_{\sigma\in\Sigma} \frac{V_d(\sigma)}{\prod_{k\in\sigma}(1-x_{k,1}t_1-\cdots - x_{k,d}t_d)}, 
\label{EQ:mom_gen_fun}\end{multline}
where $\Sigma$ is a triangulation of $P$. Moreover this generating function is independent of the triangulation $\Sigma$. 
\end{theorem}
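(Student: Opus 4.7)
The plan is to identify both sides of the identity with a single integral over $P$ and then reduce to a simplex-by-simplex computation. Observe first that the coefficient $\frac{(|I|+d)!}{I!\,d!}$ (with $|I|=i_1+\cdots+i_d$ and $I!=i_1!\cdots i_d!$) appearing in the LHS summand is precisely what arises from expanding $\frac{1}{(1-z)^{d+1}}$ as a negative-binomial series and substituting $z=\langle t,x\rangle=t_1 x_1+\cdots+t_d x_d$ via multinomial expansion:
\[
\frac{1}{(1-\langle t,x\rangle)^{d+1}} \;=\; \sum_{I\in\mathbb{N}^d}\frac{(|I|+d)!}{I!\,d!}\,x^I t^I.
\]
Integrating this termwise against Lebesgue measure on $P$ and invoking \eqref{EQ:moments2}, the LHS of the theorem is seen to equal $\tfrac{1}{V_d(P)}\int_P\frac{\lambda(dx)}{(1-\langle t,x\rangle)^{d+1}}$, valid for $t$ in a sufficiently small neighborhood of the origin so that $|\langle t,x\rangle|<1$ on $P$. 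Since the integral is additive over any triangulation $\Sigma$ of $P$ (whose full-dimensional simplices intersect pairwise in measure-zero sets), it then suffices to prove the following single-simplex identity for each $d$-simplex $\sigma=\conv\{x_{k_1},\ldots,x_{k_{d+1}}\}$ of $\Sigma$:
\[
\int_\sigma \frac{\lambda(dx)}{(1-\langle t,x\rangle)^{d+1}} \;=\; \frac{V_d(\sigma)}{\prod_{i=1}^{d+1}\bigl(1-\langle t,x_{k_i}\rangle\bigr)}.
\]

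To evaluate the left-hand integral, I would use the barycentric parametrization $x=\sum_{i=1}^{d+1}\lambda_i x_{k_i}$ with $\lambda_i\geq 0$ and $\sum_i\lambda_i=1$, so that $\lambda(dx)=d!\,V_d(\sigma)\,d\lambda_1\cdots d\lambda_d$ on the standard simplex $\Delta^d$, and $1-\langle t,x\rangle=\sum_i\lambda_i(1-a_i)$ with $a_i:=\langle t,x_{k_i}\rangle$. The problem then reduces to showing
\[
\int_{\Delta^d}\frac{d\lambda_1\cdots d\lambda_d}{\bigl(\sum_{i=1}^{d+1}\lambda_i(1-a_i)\bigr)^{d+1}} \;=\; \frac{1}{d!}\prod_{i=1}^{d+1}\frac{1}{1-a_i}.
\]
My preferred method is the Laplace-transform identity $\frac{d!}{y^{d+1}}=\int_0^\infty s^d e^{-sy}\,ds$, applied with $y=\sum_i\lambda_i(1-a_i)$. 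After exchanging the order of integration by Fubini, I would introduce $\rho_i=s\lambda_i$ for $i=1,\ldots,d+1$; a direct Jacobian computation gives $d\rho_1\cdots d\rho_{d+1}=s^d\,ds\,d\lambda_1\cdots d\lambda_d$ under the map $[0,\infty)\times\Delta^d\to\RR_{\geq 0}^{d+1}$. The exponent then acquires the fully separable form $-\sum_i\rho_i(1-a_i)$, and the resulting orthant integral factorizes at once into $\prod_i\tfrac{1}{1-a_i}$.

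The main obstacle is the clean execution of this change of variables --- the Jacobian has to be tracked carefully, and one must verify that $[0,\infty)\times\Delta^d$ really does map bijectively onto the positive orthant. An alternative I would keep in reserve is induction on $d$, integrating out one barycentric coordinate at a time using partial fractions such as $\frac{1}{(y-a)(y-b)}=\frac{1}{a-b}\bigl(\tfrac{1}{y-a}-\tfrac{1}{y-b}\bigr)$, at the cost of messier combinatorial bookkeeping. Finally, independence of the triangulation is automatic: the LHS of the original identity manifestly does not depend on $\Sigma$, so neither does the sum on the RHS.
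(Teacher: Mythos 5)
Your proof is correct and complete. Note first that the paper you are working from does not actually prove this theorem: it is cited as Theorem 2.2 of \cite{momentpaper} and used as a black box, so there is no in-text argument for you to diverge from.

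That said, your argument stands on its own and is essentially the standard one (originating with Lasserre--Avrachenkov and used in \cite{momentpaper}): recognize the LHS as $\tfrac{1}{V_d(P)}\int_P (1-\langle t,x\rangle)^{-(d+1)}\,\lambda(dx)$ via the negative-binomial/multinomial expansion, split over a triangulation, and reduce to a single-simplex identity. Your Laplace-transform step $\frac{d!}{y^{d+1}} = \int_0^\infty s^d e^{-sy}\,ds$ together with the substitution $\rho_i = s\lambda_i$ is the clean way to finish; the Jacobian really is $s^d$ (add rows $1,\dots,d$ to the last row to make it $(1,0,\dots,0)$, and the remaining $d\times d$ block is $s I_d$), and the map $[0,\infty)\times\Delta^d\to\RR_{\ge0}^{d+1}$, $(s,\lambda)\mapsto s\lambda$, is bijective off the measure-zero set $\{\rho=0\}$ with inverse $s=\sum_i\rho_i$, $\lambda_i=\rho_i/s$. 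The only detail worth flagging, which you already note, is that termwise integration requires $|\langle t,x\rangle|<1$ uniformly on $P$, hence $t$ restricted to a neighborhood of the origin; the identity then extends as an identity of rational functions. Your observation that triangulation-independence is automatic because the LHS is manifestly $\Sigma$-free is exactly the right way to dispatch that clause.
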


Writing the rational generating function of \eqref{EQ:mom_gen_fun} over a common denominator gives as a numerator an inhomogeneous polynomial of degree at most $m-d-1$ in the variables $t_1,\ldots, t_d$ whose coefficients depend only on the coordinates of the vertices $x_1,\ldots, x_m$ and not the triangulation $\Sigma$. This polynomial is called the {\em adjoint} of $P$ and is denoted $\Ad_P$.  
\begin{equation}
\Ad_P(t_1,\ldots,t_d) := \sum_{\sigma\in\Sigma}\frac{V_d(\sigma)}{V_d(P)}\prod_{k\notin\sigma} (1-x_{k,1}t_1-\cdots - x_{k,d}t_d)
\label{EQ:adjoint}\end{equation}
The adjoint polynomial of a polytope was introduced by Warren \cite{Warren_1996} in the setting of geometric modeling to study barycentric coordinates. 

\begin{defn} The {\em non-face subspace arrangement} of polytope $P = \conv\{x_1,\ldots,x_m\}\subset\RR^d$ is the collection of affine linear spaces in $\RR^d$ of the form 
\begin{equation}
L_\tau = \left\{(t_1,\ldots, t_d): \sum_{j=1}^dx_{k,j}t_j = 1, \mbox{ for all }k\in\tau\right\}
\label{EQ:nonface_space}\end{equation}
where $\tau$ ranges over all subsets of $\{1,\ldots,m\}$ such that $\{x_k:k\in \tau\}$ is not the vertex set of a face of $P$. This collection {of all these $L_\tau$}
is denoted $\mathcal{NF}(P)$.
\end{defn}

\begin{corollary}[Corollary 2.5 \cite{momentpaper}]
The adjoint $\Ad_P$ is a polynomial of degree at most $m-d-1$ that vanishes on $\mathcal{NF}(P)$.
\label{COR:adjoint_vanish}\end{corollary}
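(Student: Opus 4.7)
The plan is to split the statement into a degree bound and a vanishing claim, and to handle each separately. For the degree bound, I would read it off from \eqref{EQ:adjoint} directly: writing $\ell_k(t) := 1 - x_{k,1}t_1 - \cdots - x_{k,d}t_d$, each summand $\frac{V_d(\sigma)}{V_d(P)}\prod_{k \notin \sigma}\ell_k(t)$ involves exactly $m - d - 1$ linear factors, because $P$ is simplicial and so every $\sigma \in \Sigma$ has $d+1$ vertices. Hence each summand has total degree $m - d - 1$ in $t_1, \ldots, t_d$, and the whole sum has degree at most $m - d - 1$.

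For the vanishing, I would fix a non-face $\tau \subseteq \{1, \ldots, m\}$ and a point $t \in L_\tau$, so that $\ell_k(t) = 0$ precisely for $k \in \tau$. The essential input is the triangulation-independence asserted in the preceding theorem: since $\Ad_P$ is the same polynomial regardless of the triangulation used to present it, it suffices to exhibit one triangulation $\Sigma$ of $P$ with the property that no maximal simplex $\sigma \in \Sigma$ contains $\tau$ as a subset of its vertex set. For such a $\Sigma$, every summand of \eqref{EQ:adjoint} at $t$ picks up a factor $\ell_k(t) = 0$ coming from some index $k \in \tau \setminus \sigma$, and the whole sum collapses to zero.

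The main obstacle will be establishing the existence of this ``$\tau$-avoiding'' triangulation. When $|\tau| > d+1$ there is nothing to show, since no $d$-simplex has enough vertices to contain $\tau$. When $|\tau| \leq d+1$, I would argue using the non-face hypothesis directly: a simplex on the vertex set $\tau$ cannot be forced into every triangulation of $P$ (if it were, $\tau$ would itself be a face of $P$), so a $\tau$-avoiding triangulation can be obtained either by performing bistellar flips on an arbitrary initial triangulation to remove every simplex containing $\tau$, or by selecting a sufficiently generic regular triangulation via a height function on the vertices of $P$. Granting this combinatorial fact, the vanishing of $\Ad_P$ on $L_\tau$ follows, and because $\tau$ was an arbitrary non-face, $\Ad_P$ vanishes on the whole non-face arrangement $\mathcal{NF}(P)$.
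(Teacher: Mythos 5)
The paper does not prove this statement itself — it imports it verbatim from \cite{momentpaper}, so there is no in-paper proof to compare against. Assessing your proposal on its own: the overall architecture is sound (read the degree bound off \eqref{EQ:adjoint}; for vanishing, invoke triangulation-independence and exhibit a $\tau$-avoiding triangulation so every summand picks up a zero factor from some $k\in\tau\setminus\sigma$), and the degree bound is correct. The genuine gap is in your key combinatorial lemma. You write that ``a simplex on the vertex set $\tau$ cannot be forced into every triangulation of $P$ (if it were, $\tau$ would itself be a face of $P$)'', but the parenthetical is not an argument: the implication ``appears in every triangulation $\Rightarrow$ is a face'' is exactly the lemma you need, asserted rather than proved. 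The two fallback sketches don't repair this. The bistellar-flip route needs (a) connectivity of the flip graph and (b) prior knowledge that a $\tau$-avoiding target triangulation exists — which is the thing you are trying to show. The generic-regular-triangulation route likewise needs a separate argument that genericity actually excludes $\conv\{x_k : k\in\tau\}$; that is not automatic, since some nonfaces (e.g.\ an internal diagonal of a square) do appear in regular triangulations for an open set of heights.

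There is a short, self-contained argument that closes the gap and that I'd suggest substituting. Since $\tau$ is a nonface we have $\tau\neq\{1,\ldots,m\}$ (the full vertex set is the vertex set of the face $P$ itself), so pick any vertex index $v\notin\tau$ and take the pulling triangulation $\Sigma$ of $P$ at $x_v$. Every maximal simplex of $\Sigma$ has vertex set $\{v\}\cup V(\sigma')$ for some simplex $\sigma'$ contained in a facet $F$ of $P$ with $x_v\notin F$. Because $P$ is simplicial, $F$ is a $(d-1)$-simplex, so any subset of $V(F)$ is the vertex set of a face of $F$ and hence of $P$; since $\tau$ is a nonface of $P$, this forces $\tau\not\subseteq V(F)$ for every facet $F$. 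Combined with $v\notin\tau$, no maximal simplex of $\Sigma$ contains $\tau$, and your vanishing argument then goes through exactly as you intended.
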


\begin{example} Consider the square $P = \conv\{(2,1),(2,3),(4,3),(4,1)\}$. Its nonfaces are the sets $\{x_1,x_3\}$, $\{x_2,x_4\}$, and any set of $3$ vertices. Then for nonface indexed by $\tau=\{2,4\}$, we have $L_\tau$ is the intersection, $(1/5,1/5)$, of the two lines $\LL_{v_2}$: $2t_1+3t_2=1$, and $\LL_{v_4}$: $4t_1+t_2=1$, pictured below left. Similarly one can check that $L_{\{1,3\}} = (1,-1)$. Any $\tau$ of size $3$ has $L_\tau=\emptyset$. 

 \begin{center}
     \begin{tikzpicture}[x=2cm, y=2cm]
          \node[right] at (0.2,0.2){$L_{\{2,4\}} = \LL_{v_{4}}\cap \LL_{v_{2}}$};
        \draw[->] (-1/2,0) -- (1.5,0);
        \draw[->] (0,-1/2) -- (0,1.5);
    \node[right, color=orange] at (0,1) {$\LL_{v_{4}}$};
    \node[below, color=cyan] at (1,0) {$\LL_{v_{2}}$};
      \clip(-1/2.,-1/2.) rectangle (1,1.5);
    \draw [line width=1pt, color=cyan] plot(\x,{(--1-2*\x)/3});
    \draw [line width=1pt, color=orange] plot(\x,{(--1-4*\x)/1});
    \node[fill=black,inner sep=1pt] at (0.2,0.2) {};
      \end{tikzpicture}\hspace{50pt}
             \begin{tikzpicture}
         \coordinate[] (a) at (0,0);
         \coordinate [](b) at  (0,2);
         \coordinate [] (c) at (2,2);
         \coordinate[] (d) at (2,0);
                  
         \node[ left, shape=circle, inner sep=1pt, color=black] (na) at (0,0) {$v_{1}$};
         \node [ left, shape=circle, inner sep=1pt, color=cyan ](nb) at  (0,2) {$v_{2}$};
         \node [right, shape=circle, inner sep=1pt, color=black] (nc) at (2,2){$v_{3}$};
         \node[right, shape=circle, inner sep=1pt, color=orange] (nd) at (2,0) {$v_{4}$};
         
         \draw [fill=pink, color=yellow, opacity=0.6] (a) -- (b) -- (d) -- cycle;
         \draw [fill=pink, color=magenta, opacity=0.6] (b) -- (c) -- (d) -- cycle;
         \node [] at ($(a)+(0.7,0.5)$) {$\sigma_{1}$};
         \node[] at ($(d)+(-0.5,1.5)$) {$\sigma_2$};      
         
         \draw (0,-0.9) node[] {}; 
       \end{tikzpicture}
    \end{center}

Now using the triangulation pictured above right, we calculate the adjoint of $P$ as 
$$\frac{1}{2}(1-2t_1-t_2) + \frac{1}{2}(1-4t_1-3t_2) = 1-3t_1-2t_2.$$ This is degree $4-2-1$ and vanishes on $L_{\{1,3\}}$ and $L_{\{2,4\}}$.
\end{example}

In \cite{momentpaper} it was shown that not only does the adjoint vanish on the nonface arrangement of $P$, but it is the unique polynomial (up to scalar multiples) of degree $m-d-1$ that vanishes there. 

\begin{theorem}[Theorem 2.6 \cite{momentpaper}]
If the projective closure $\mathcal{H}_{P^*}\subset\mathbb{P}^d$ of the hyperplane arrangement formed by the linear spans of the facets of the dual polytope $P^*$ is simple, then there is a unique hypersurface of degree $m-d-1$ which vanishes along the projective closure of $\mathcal{NF}(P)$. The defining polynomial of this hypersurface is the adjoint of $P$.
\label{THM:adjoint_unique}
\end{theorem}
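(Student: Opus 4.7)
The plan is to establish uniqueness by showing that the vector space
\[
V \;:=\; \{\, f \in \RR[t_0,t_1,\ldots,t_d]_{m-d-1} : f \text{ vanishes on } \overline{\mathcal{NF}(P)} \,\}
\]
of degree-$(m-d-1)$ polynomials (written in homogeneous coordinates $[t_0:\cdots:t_d]$) vanishing on the projective closure of $\mathcal{NF}(P)$ is one-dimensional. Since Corollary~\ref{COR:adjoint_vanish} already places the homogenization of $\Ad_P$ in $V$, once we know $\dim V \le 1$ and that $\Ad_P \not\equiv 0$, the polynomial $\Ad_P$ must span $V$ and the theorem follows.

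First I would verify that $\Ad_P$ is genuinely nonzero and has degree exactly $m-d-1$. Each summand of \eqref{EQ:adjoint} is a product of $m-d-1$ linear forms, so the degree bound holds; to see that the top-degree part is nonzero, I would expand each summand and track the leading monomial, using that $P$ is full-dimensional so that the vertex coordinates don't all satisfy a common nontrivial linear relation that would force cancellation. This step is essentially a direct calculation and should not be the obstacle.

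The heart of the proof is the dimension count $\dim V \le 1$. The hyperplane arrangement $\mathcal{H}_{P^*}$ consists of the projective closures of $\LL_{v_k} : x_{k,1}t_1 + \cdots + x_{k,d}t_d = 1$ together with the hyperplane at infinity, and simplicity says every $d{+}1$ of these $m{+}1$ hyperplanes meet emptily in $\mathbb{P}^d$. Under this hypothesis each $L_\tau$ has the expected codimension $|\tau|$, and the points of the form $L_\tau$ with $|\tau|=d$ (for $\tau$ a non-face) are isolated and distinct from the ``face points'' $L_\sigma$ where $\sigma$ indexes the vertex set of a face of $P$. I would then try to organize the non-face subspaces via the combinatorics of the face lattice of $P^*$ and invoke (or adapt) a Cayley--Bacharach-style statement: of the $\binom{m}{d}$ intersection points of $d$ hyperplanes in $\mathcal{H}_{P^*}$, exactly the face subset imposes ``dependent'' conditions on polynomials of degree $m-d-1$, and the residual non-face locus then imposes independent conditions with codimension-one defect. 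An alternative is induction on $m-d$: the base case $m=d+1$ is the simplex, where $m-d-1=0$ and the adjoint is a nonzero constant, and the inductive step would delete a vertex and relate the nonface arrangements of the two polytopes by restriction to one of the hyperplanes $\LL_{v_i}$, obtaining a short exact sequence of ideal sheaves whose long exact cohomology sequence feeds the dimension count.

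The main obstacle is precisely this last codimension-one dimension count. Simplicity of $\mathcal{H}_{P^*}$ is a genericity condition that forces the non-face subspaces to have expected dimension, but even so, a collection of linear subspaces of a given total ``weight'' need not impose independent conditions on polynomials of fixed degree, and obtaining exactly codimension one requires the non-face combinatorics and the arrangement geometry to interlock. I expect the cleanest route is to identify $V$ with a residue or canonical-form construction on the arrangement complement, where the top residue form on the complement of $\mathcal{H}_{P^*}$ is well-known to be essentially unique under simplicity; making that identification rigorous, or else invoking a suitable Cayley--Bacharach theorem for subspace arrangements in general position, is the technical hurdle on which the whole argument hinges.
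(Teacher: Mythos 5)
This statement is quoted from \cite{momentpaper} as background (it is labeled ``Theorem 2.6 \cite{momentpaper}'') and the present paper does not prove it, so there is no in-paper proof to compare against; the uniqueness argument lives entirely in the cited reference. Evaluating your proposal on its own terms: you have correctly identified the logical shape of the claim — once Corollary~\ref{COR:adjoint_vanish} places (a homogenization of) $\Ad_P$ in the linear system $V$ of degree-$(m-d-1)$ forms vanishing on $\overline{\mathcal{NF}(P)}$, uniqueness is exactly the statement $\dim V = 1$, and the nonvanishing of $\Ad_P$ is a reasonable preliminary to sort out. But you have not actually proved the theorem: the dimension bound $\dim V \le 1$ is the entire content of the result, and your proposal offers three candidate strategies (a Cayley--Bacharach-style independence-of-conditions argument on the intersection points of $d$ hyperplanes from $\mathcal{H}_{P^*}$, an induction on $m-d$ via a short exact sequence of ideal sheaves, and an identification of $V$ with residue forms on the arrangement complement) without carrying any of them through. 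You acknowledge this explicitly (``the technical hurdle on which the whole argument hinges''), which is honest, but it means the proposal is an outline, not a proof.

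Two further cautions on the sketched strategies. First, the ``Cayley--Bacharach for subspace arrangements'' statement you want to invoke is not a standard off-the-shelf theorem; the residual locus $\mathcal{NF}(P)$ is a union of linear subspaces of varying dimension, not a zero-dimensional scheme, so the classical Cayley--Bacharach framework does not apply directly and would need a genuine generalization. Second, the inductive route (delete a vertex, restrict to a hyperplane $\LL_{v_i}$) is closer to how these adjoint-uniqueness statements are actually handled in the Wachspress/adjoint literature, but the inductive step requires a careful comparison of $\mathcal{NF}(P)$ with $\mathcal{NF}(P')$ for the deleted-vertex polytope $P'$, and simplicity of $\mathcal{H}_{P^*}$ must be shown to descend appropriately; none of that bookkeeping is present. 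As it stands, the proposal reduces the theorem to a statement that is essentially equivalent in difficulty to the theorem itself.
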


It is noted in \cite{momentpaper} that the map $P\mapsto Ad_P$ represents the computation of all moments of $P$ and induces a polynomial map $X\mapsto Ad_X$ on an open dense set of matrices $X\in\RR^{m\times d}$. {They then define the {\em adjoint moment variety} $\mathcal{M}_{Ad}(P)$ to be the Zariski closure of the image of this map in complex projective space $\mathbb{P}^{\binom{m-1}{d}-1}$.} This variety can be viewed as a moduli space of Wachspress varieties (for more on these varieties see \cite{Wachspress}).

\section{Surface Adjoint Polynomial} \label{sec:genfn}

We have just seen how Minkowski volume tensors are connected to moments of a probability distribution, and that an efficient way of recording this connection is via the adjoint polynomial. In this section we take the next natural step and consider Minkowski surface tensors. We will show how the generating function of Section 3 can be used to build a generating function for Minkowski surface tensors, and that a natural analog of the adjoint polynomial results. 

From \eqref{EQ:tensor_as_moments} we see that we can think of the generating function in \eqref{EQ:mom_gen_fun} as recording rank~$r$ volume tensors as its graded (by total degree) pieces. 
\begin{prop}
The generating function for moments of a polytope $P$ given in Theorem~\ref{THM:genfun} has as its degree $r$ graded piece the rank $r$ Minkowski volume tensor of $P$. 
\label{PROP:graded_gen_fun}\end{prop}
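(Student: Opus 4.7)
The plan is to isolate the degree $r$ homogeneous piece of the rational generating function in Theorem~\ref{THM:genfun} and match it against the expression for $\Phi_d^{r,0}(P)$ derived in Proposition~\ref{PROP:tensor_as_moments}. The identification I will use is the classical polarization correspondence: every symmetric rank $r$ tensor $T$ on $\RR^d$ is recorded equivalently by the homogeneous polynomial $t\mapsto T(t,\ldots,t)$ in $t=(t_1,\ldots,t_d)$ (treating $t_i$ as the coefficient of $e_i$), so it suffices to compare two polynomials in the $t_i$'s.

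First I would collect the terms of \eqref{EQ:mom_gen_fun} with $i_1+\cdots+i_d=r$, obtaining
\[
G_r(t) \;=\; \sum_{|I|=r}\frac{(r+d)!}{i_1!\cdots i_d!\,d!}\,m_I\, t_1^{i_1}\cdots t_d^{i_d},
\]
where $I=(i_1,\ldots,i_d)$. Then I would expand $(x\cdot t)^r$ by the multinomial theorem inside
\[
\Phi_d^{r,0}(P)(t,\ldots,t) \;=\; \tfrac{1}{r!}\int_P (x\cdot t)^r\,\lambda(dx)
\]
and apply $\int_P x_1^{i_1}\cdots x_d^{i_d}\,\lambda(dx) = V_d(P)\, m_I$ to conclude
\[
\Phi_d^{r,0}(P)(t,\ldots,t) \;=\; V_d(P)\sum_{|I|=r} \frac{m_I}{i_1!\cdots i_d!}\, t_1^{i_1}\cdots t_d^{i_d},
\]
which is exactly the expansion of Proposition~\ref{PROP:tensor_as_moments} written as a polynomial. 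Matching coefficients of $t^I$ in the two polynomials, they agree up to the single global scalar $\binom{r+d}{d}\,r!/V_d(P)$, which depends only on $r$ and on the fixed polytope $P$. Hence, under the polarization identification between symmetric rank $r$ tensors and homogeneous polynomials of degree $r$, the polynomial $G_r$ records exactly the data of the rank $r$ volume tensor $\Phi_d^{r,0}(P)$.

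The only real subtlety is bookkeeping: Theorem~\ref{THM:genfun} uses the normalization $(i_1+\cdots+i_d+d)!/(i_1!\cdots i_d!\,d!)$ rather than the more direct multinomial coefficient $\binom{r}{i_1,\ldots,i_d}$ that appears in the expansion of $(x\cdot t)^r$, so one has to be careful with factorial factors when stating what ``identifying the graded piece with the tensor'' means. Beyond that, no new geometric input is needed past Proposition~\ref{PROP:tensor_as_moments}; the content is essentially the observation that the two sides are just two packaging conventions for the same collection of moments.
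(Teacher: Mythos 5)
Your proof is correct and follows essentially the same route as the paper: both expand $x^r$ by the multinomial theorem to land on Proposition~\ref{PROP:tensor_as_moments}'s expression of $\Phi_d^{r,0}(P)$ in terms of moments, and then read off the degree-$r$ piece of the moment generating function \eqref{EQ:mom_gen_fun}. You are right to flag the scalar $\binom{r+d}{d}\,r!/V_d(P) = (r+d)!/(d!\,V_d(P))$; the paper's own proof carries the same $(r+d)!/d!$ prefactor in \eqref{EQ:graded_gen_fun}, so the proposition's phrasing ``is the rank $r$ volume tensor'' should be read as ``records, up to that explicit scalar.''
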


\begin{proof} Denote by $\Phi_j^{r,s}(P)(\bt)$ the polynomial in $\RR[t_1,\ldots,t_d]$ obtained by {making the substitution $e_i\mapsto t_i$ and replacing the tensor product with regular multiplication.}
Then from \eqref{EQ:tensor_as_moments} and \eqref{EQ:mom_gen_fun} we immediately get\begin{multline}
\sum_{\sigma\in\Sigma} \frac{V_d(\sigma)}{\prod_{k\in\sigma}(1-x_{k,1}t_1-\cdots - x_{k,d}t_d)} \\
=\sum_{r\geq 0}\left(V_d(P) \sum_{\stackrel{i_1,\ldots, i_d\in\mathbb{N}}{i_1+\cdots+i_d=r}}\frac{(r+d)!}{i_1!\cdots i_d!d!}m_{i_1\ldots i_d} t_1^{i_1}\cdots t_d^{i_d}\right) \\
=\sum_{r\geq 0}\left(\frac{(r+d)!}{d!} \Phi_d^{r,0}(P)(\bt) \right).
\label{EQ:graded_gen_fun}\end{multline}

\end{proof}
\begin{remark}  Notice that by symmetry of the tensors, several components of the associated array will be collected into a single term of the tensor polynomial.
\end{remark}

\begin{example} {\color{black}
Recall from Example~\ref{EX:quad} that
\begin{equation*}
\Phi_2^{r,0}(P) 
		= \frac{1}{r!}\sum_{k=0}^r\binom{r}{k} \frac{1}{(k+1)(r-k+1)} a^{k+1}b^{r-k+1}(e_1^k\otimes e_2^{r-k}).
\end{equation*}
Then we get that the tensor polynomial is
\begin{equation*}
\Phi_2^{r,0}(P)(\bt)
		= \frac{1}{r!}\sum_{k=0}^r\binom{r}{k} \frac{1}{(k+1)(r-k+1)} a^{k+1}b^{r-k+1}\cdot t_1^k t_2^{r-k}.
\end{equation*}
}
\end{example}

Let us now look at what happens when we consider surface tensors. By slight abuse of terminology, {we will call tensors of the form $\Phi^{r,s}_{d-1}$ surface tensors for all values of $r\geq 0$.}
Now recall from~\eqref{EQ:poly_surface_tensors} we have for each $r\geq 0$,
$$\Phi_{d-1}^{r,s}(P) = \frac{1}{s!\,\omega_{1+s}}\sum_{F\in\FF_{d-1}(P)} \Phi_{d-1}^{r,0}(F)u_F^s.$$
So now fixing an $s\geq 0$, we can consider the generating function for all rank $r+s$ surface tensors of~$P$ using \eqref{EQ:graded_gen_fun}
\begin{align*}
\sum_{r\geq 0} \frac{(r+d-1)!}{(d-1)!}\Phi_{d-1}^{r,s}(P)(\bt) 
	& = \sum_{r\geq 0} \frac{1}{s!\,\omega_{1+s}}\sum_{F\in\FF_{d-1}(P)} \frac{(r+d-1)!}{(d-1)!}\Phi_{d-1}^{r,0}(F)(\bt)u_F^s \\
	& = \frac{1}{s!\,\omega_{1+s}}\sum_{F\in\FF_{d-1}(P)} \sum_{\sigma\in\Sigma_F}\frac{V_{d-1}(\sigma)u_F^s}{\prod_{k\in\sigma}(1-x_{k,1}t_1-\cdots-x_{k,d}t_d)},
\end{align*}
which we rewrite using the adjoint of each facet to get the following result. 

\begin{theorem} Fix an integer $s\geq 0$. Let $P = \conv\{x_1,\ldots,x_m\}\subset\RR^d$ be a polytope with simplicial facets. Let $\LL_k := 1-x_{k,1}t_1-\cdots-x_{k,d}t_d$ {for $k=1,\ldots,m$, then the generating function for rank $r+s$ Minkowski surface tensors of $P$ as $r$ ranges over $\ZZ_{\geq 0}$} is denoted $MG_P(\bt)$ and given as follows
\begin{align*}
\sum_{r\geq 0} \frac{(r+d-1)!}{(d-1)!}\Phi_{d-1}^{r,s}(P)(\bt) 
	& = \frac{1}{s!\,\omega_{1+s}}\sum_{F\in\FF_{d-1}(P)} \frac{V_{d-1}(F)\Ad_F(t_1,\ldots,t_d)u_F^s}{\prod_{k\in F}\LL_k} \\
	& = \frac{1}{s!\,\omega_{1+s}}\frac{\sum_{F\in\FF_{d-1}(P)} V_{d-1}(F)\Ad_F(t_1,\ldots,t_d)u_F^s\prod_{k\notin F}\LL_k}{\prod_{k=1}^m\LL_k}.
\end{align*}
\label{THM:genfun}
\end{theorem}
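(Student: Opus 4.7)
The plan is to combine the facet decomposition \eqref{EQ:poly_surface_tensors} of each surface tensor $\Phi_{d-1}^{r,s}(P)$ with the moment generating function of Proposition~\ref{PROP:graded_gen_fun}, applied facet-by-facet, and then to rewrite each inner sum over a triangulation in terms of the facet's adjoint polynomial.

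Concretely, I would first apply the substitution $e_i \mapsto t_i$ (and replace tensor product by multiplication) to \eqref{EQ:poly_surface_tensors}, obtaining
$$\Phi_{d-1}^{r,s}(P)(\bt) \;=\; \frac{1}{s!\,\omega_{1+s}} \sum_{F\in\FF_{d-1}(P)} \Phi_{d-1}^{r,0}(F)(\bt)\, u_F^s,$$
where $u_F^s$ is understood, after substitution, as the scalar polynomial $(u_{F,1}t_1+\cdots+u_{F,d}t_d)^s$. Next I would multiply by $\frac{(r+d-1)!}{(d-1)!}$, sum over $r\geq 0$, and interchange the (finite) sum over facets with the sum over $r$. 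This reduces the claim to evaluating $\sum_{r\geq 0}\frac{(r+d-1)!}{(d-1)!}\Phi_{d-1}^{r,0}(F)(\bt)$ separately for each facet $F$.

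Each facet $F$ of $P$ is, by hypothesis, a simplicial $(d-1)$-dimensional polytope whose vertices inherit $d$-dimensional coordinates from $P$, and $\Phi_{d-1}^{r,0}(F)$ is exactly its $(d-1)$-dimensional ``volume tensor'' with respect to $\HH^{d-1}$. So Proposition~\ref{PROP:graded_gen_fun} (equivalently, equation \eqref{EQ:graded_gen_fun}) applies to $F$ verbatim with $d$ replaced by $d-1$, yielding
$$\sum_{r\geq 0}\frac{(r+d-1)!}{(d-1)!}\Phi_{d-1}^{r,0}(F)(\bt) \;=\; \sum_{\sigma\in\Sigma_F} \frac{V_{d-1}(\sigma)}{\prod_{k\in\sigma}\LL_k}$$
for any triangulation $\Sigma_F$ of $F$. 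Multiplying numerator and denominator of each term by $\prod_{k\in F,\,k\notin\sigma}\LL_k$ and recognizing the result through \eqref{EQ:adjoint} gives
$$\sum_{\sigma\in\Sigma_F} \frac{V_{d-1}(\sigma)}{\prod_{k\in\sigma}\LL_k} \;=\; \frac{V_{d-1}(F)\,\Ad_F(t_1,\ldots,t_d)}{\prod_{k\in F}\LL_k},$$
which establishes the first equality of the theorem. The second equality is then a purely formal step: clear denominators by multiplying each facet term by $\prod_{k\notin F}\LL_k/\prod_{k\notin F}\LL_k$ to put the whole right-hand side over the common denominator $\prod_{k=1}^m \LL_k$.

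There is no serious obstacle here once Proposition~\ref{PROP:graded_gen_fun} is in hand; the only mild subtlety is legitimately invoking that proposition for $F$, which is $(d-1)$-dimensional but embedded in $\RR^d$. This is fine because the generating function formula depends only on the vertex coordinates $x_k\in\RR^d$ and the $(d-1)$-dimensional Hausdorff measure on the facet, with all factorials and volumes uniformly shifted down by one dimension; moreover, independence of the resulting expression from the choice of triangulation $\Sigma_F$ is inherited from the same independence in Theorem~\ref{THM:genfun} (the moment generating function result from \cite{momentpaper}).
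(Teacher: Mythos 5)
Your proof follows essentially the same route as the paper's: start from the facet decomposition \eqref{EQ:poly_surface_tensors}, pass to the tensor polynomials via $e_i\mapsto t_i$, swap the finite sum over facets with the sum over $r$, apply Proposition~\ref{PROP:graded_gen_fun} (i.e.\ equation \eqref{EQ:graded_gen_fun}) to each facet with the dimension dropped to $d-1$, and then recognize the adjoint via \eqref{EQ:adjoint}; the second equality is the formal clearing of denominators. Your remark that the facet is $(d-1)$-dimensional but embedded in $\RR^d$ is worth making, and the justification you give is correct.

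One small point of divergence from the paper: you say that after the substitution $u_F^s$ becomes the scalar polynomial $(u_{F,1}t_1+\cdots+u_{F,d}t_d)^s$. The paper does not do this. As the remark following the definition of $\alpha_P^s$ makes explicit, in $MG_P(\bt)$ the factor $u_F^s$ is left as the $s$-fold tensor (equivalently, as a formal variable $u_F$ in $\RR[u_1,\ldots,u_f][t_1,\ldots,t_d]$ with $\deg u_F=0$), while the substitution $e_i\mapsto t_i$ is applied only to the $x^r$ part. This is deliberate: it keeps $\Phi_{d-1}^{r,s}(P)(\bt)$ of $\bt$-degree exactly $r$ and allows the tensor components to be read off cleanly via derivatives in $\bt$ alone (as in Corollary~\ref{COR:simplicial_tensor_formula}). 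If one also substitutes in $u_F^s$ as you propose, the identity still holds as a scalar power-series identity, but the $\bt$-degree-$n$ piece then mixes the $x^{n-s}$ and $u^s$ indices into one symmetrized polynomial, so distinct components of $\Phi_{d-1}^{n-s,s}$ collapse and cannot be recovered from the coefficients. So keep the $u_F^s$ factors formal; otherwise your argument is the paper's.
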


We note that whereas the generating function of \cite{momentpaper} is only stated to be valid for simplicial polytopes, here we can define the surface adjoint for any polytope whose facets are simplicial polytopes.

\begin{defn} Call the numerator of the above generating function the {\em surface adjoint of $P$}, and denote it by $\alpha_P^s$. If $P\subset\RR^d$ has $m$ vertices $x_1,\ldots,x_m$ and $f$ facets $F_1,\ldots, F_f$ (which are themselves simplicial polytopes), then 
$$\alpha_P^s(t_1,\ldots,t_d,u_1,\ldots,u_f) := \sum_{i=1}^f V_{d-1}(F_i)\Ad_{F_i}(t_1,\ldots, t_d)u_i^s\prod_{k\notin F_i}\LL_k.$$
\end{defn}

\begin{remark} We must be careful to note that in the case of surface adjoints there is a subtle distinction between the adjoint as a polynomial and the numerator of the generating function. In the generating function, $u_F^s$ is the $s$-fold tensor of the unit normal to facet $F$ of $P$. In the surface adjoint $\alpha_P^s$ we regard $u_F$ as a variable of the polynomial ring $\RR[u_1,\ldots,u_f][t_1,\ldots,t_d]$. In what follows we will mostly be concerned with $\alpha_P^s$ as a polynomial in $\bt$ {since it is the coefficient of the monomials in $\bt$ that give us tensor components}. Thus we will endow the above ring with the grading given by $\deg(t_i)=1$ for $i=1,\ldots,d$, and $\deg(u_F) = 0$ for $F=1,\ldots,f$.
\end{remark}

\begin{example} Let $P = \conv\{(1,2),(-1,1),(-2,-1),(1,-1)\}$. Since each facet of $P$ is a $1$-dimensional simplex, $\Ad_{F_i} = 1$ for each $i=1,2,3,4$. Then 
\begin{multline*} \alpha_P^s = \sqrt{5}(1+2t_1+t_2)(1-t_1+t_2)u_1^s + \sqrt{5}(1-t_1-2t_2)(1-t_1+t_2)u_2^s \\
+ 3(1-t_1-2t_2)(1+t_1-t_2)u_3^s+3(1+2t_1+t_2)(1+t_1-t_2)u_4^s.\end{multline*}
\end{example}

\begin{prop}
The surface adjoint $\alpha_P^s$ is a polynomial of degree at most $m-d$ that vanishes on the union of $\mathcal{NF}(F)$ for facets F of P.
\label{PROP:surface_adjoint_vanish}
\end{prop}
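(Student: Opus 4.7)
The plan is to establish both assertions by analyzing $\alpha_P^s = \sum_{i=1}^f V_{d-1}(F_i)\Ad_{F_i}(\bt) u_i^s \prod_{k\notin F_i}\LL_k$ summand by summand, leveraging Corollary~\ref{COR:adjoint_vanish} applied to each simplicial facet $F_i$.

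For the degree bound, let $m_i$ denote the number of vertices of $F_i$. Since $F_i$ is a simplicial polytope of dimension $d-1$, Corollary~\ref{COR:adjoint_vanish} yields $\deg_{\bt}(\Ad_{F_i}) \leq m_i - (d-1) - 1 = m_i - d$. The complementary product $\prod_{k\notin F_i}\LL_k$ contributes exactly $m - m_i$ to the $\bt$-degree (and the factor $u_i^s$ contributes $0$ under the stipulated grading). Summing, each summand has $\bt$-degree at most $(m_i - d) + (m - m_i) = m - d$, and hence so does $\alpha_P^s$.

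For the vanishing, fix a facet $F$ of $P$ and a point $\bt \in L_\tau \in \mathcal{NF}(F)$. By definition, $\tau$ is a set of indices of vertices of $F$ with $\{x_k : k\in\tau\}$ not the vertex set of any face of $F$, and $\LL_k(\bt)=0$ for every $k\in\tau$. I aim to show that every summand of $\alpha_P^s$ vanishes at $\bt$. The summand indexed by $F_i = F$ dies immediately because Corollary~\ref{COR:adjoint_vanish} applied to $F$ gives $\Ad_F(\bt) = 0$. For any $F_i \neq F$, it will be enough to show that some factor of $\prod_{k\notin F_i}\LL_k$ vanishes at $\bt$.

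This last step is the combinatorial heart of the argument and the main obstacle. The key observation is that $F\cap F_i$ is a proper face of $F$ (since $F\neq F_i$), and because $F$ is simplicial, every proper face of $F$ is a simplex; in particular $F\cap F_i$ is a simplex, so every subset of its vertices is itself a face of $F\cap F_i$, and therefore a face of $F$. Consequently, if $\tau$ were contained in the index set of $F\cap F_i$, then $\{x_k : k\in\tau\}$ would be a face of $F$, contradicting the choice of $\tau$. Hence some $k\in\tau$ must satisfy $k\notin F_i$, and the corresponding factor $\LL_k$ appears in $\prod_{k\notin F_i}\LL_k$ and evaluates to zero at $\bt$. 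This kills all remaining summands and completes the plan.
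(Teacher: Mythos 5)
Your proof is correct, and the vanishing argument is the same high-level strategy as the paper's: fix a facet $F$ and a nonface space $L_\tau\in\mathcal{NF}(F)$, then kill each summand of $\alpha^s_P$ individually, using Corollary~\ref{COR:adjoint_vanish} for the $F$-summand and a vanishing linear factor $\LL_k$ (with $k\in\tau$ and $k\notin F_i$) for every other summand. The paper, though, splits the $F_i\neq F$ step into two cases: either $\tau$ is also a nonface of $F_i$, in which case $\Ad_{F_i}$ vanishes on $L_\tau$; or $\tau$ meets $F\setminus F_i$, in which case a linear factor vanishes. You instead observe that the first alternative is vacuous under the standing hypothesis that the facets are simplicial polytopes: $F\cap F_i$ is a proper face of $F$, hence a simplex, so any $\tau$ whose vertices all lie in $F\cap F_i$ would already be the vertex set of a face of $F$ and could not be a nonface. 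This collapses the case split and removes any appeal to $\Ad_{F_i}$ for $F_i\neq F$, which is a genuine (if modest) simplification. You also prove the degree bound explicitly, combining $\deg\Ad_{F_i}\leq m_i-(d-1)-1=m_i-d$ for the $(d-1)$-polytope $F_i$ on $m_i$ vertices with the $m-m_i$ linear factors in the complementary product; the paper asserts this bound in the proposition but does not argue it in the proof.
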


\begin{proof} Fix some facet $F$ of $P$. We will show that each {summand (corresponding to some facet of $P$)} of the surface adjoint $\alpha_P^s$ vanishes on the nonface subspace arrangement of $F$. First $\Ad_F(t_1,\ldots,t_d)$ vanishes on $\mathcal{NF}(F)$ by Corollary~\ref{COR:adjoint_vanish}. Now for each facet $F'\neq F$ of $P$, every nonface $\tau$ of $F$ satisfies one of 
\begin{enumerate}[(i).]
\item $\tau$ is a nonface of $F'$
\item $\tau$ contains some vertex $x_\ell\in F\setminus F'$.
\end{enumerate}
If we are in case (i), then $L_\tau\subset\mathcal{NF}(F')$, so that $\Ad_{F'}$ vanishes on $L_\tau$ by Corollary~\ref{COR:adjoint_vanish}. If we are in case (ii), then $\LL_\ell = 1-x_{\ell,1}t_1-\cdots-x_{\ell,d}t_d$ vanishes on $L_\tau$, since $L_\tau = \{(t_1,\ldots,t_d) : \LL_k(t) = 0,\forall k\in\tau\}$. Then the summand $V_{d-1}(F')\Ad_{F'}(t_1,\ldots,t_d)\prod_{k\notin F'}\LL_k$ vanishes on $L_\tau$, since $\LL_\ell$ with $\ell\notin F'$ does. Since the $F'$ summand thus vanishes on $L_\tau$ for all nonfaces $\tau$ of~$F$, it vanishes on $\mathcal{NF}(F)$, as required. 
\end{proof}

\begin{example} Consider a perturbed cube $P$ with vertices $(1,1,2)$, \\ 
\begin{minipage}{0.52\textwidth} $(1,-1,1)$, $(1,-2,-1)$, $(1,1,-1)$, $(\frac{1}{2},1,2)$, $(-1,-1,1)$, $(-2,-2,-1)$, $(-2,1,-1)$. 
Then, for example, we have $\LL_1 = 1-t_1-t_2-2t_3$, and one can compute that the surface adjoint of $P$ is 
\begin{align*}
\alpha_P^s & = 6u_{F_1}^s\left(1-t_1+\frac{1}{2}t_2-\frac{1}{2}t_3\right)\LL_5\LL_6\LL_7\LL_8 \\
	&\phantom{=}+ \frac{7}{4}u_{F_2}^s\left(1-\frac{1}{7}t_1-\frac{1}{7}t_2-\frac{11}{7}t_3\right)\LL_3\LL_4\LL_7\LL_8 \\
	&\phantom{=} +\frac{5}{2}u_{F_3}^s\left(1+\frac{1}{5}t_1+\frac{7}{5}t_2-\frac{1}{5}t_3\right)\LL_1\LL_4\LL_5\LL_8 +9u_{F_4}^s\left(1+\frac{1}{2}t_1+\frac{1}{2}t_2+t_3\right)\LL_1\LL_2\LL_5\LL_6 \\
	&\phantom{=}+ \frac{27}{4}u_{F_5}^s\left(1-t_2-t_3\right)\LL_2\LL_3\LL_6\LL_7 + 3u_{F_6}^s\left(1+\frac{5}{4}t_1+\frac{1}{2}t_2-\frac{1}{2}t_3\right)\LL_1\LL_2\LL_3\LL_4. 
\end{align*}
\end{minipage}\begin{minipage}[t][10pt][b]{0.4\textwidth}
\hspace{-120pt}\vspace{-10pt}
       \begin{tikzpicture}[scale=0.95]
    \coordinate(123) at (-0.3,-2); 
    \coordinate (124) at (0.4,0.8); 
    \coordinate (134) at (2.7,0.8);
    \coordinate (133) at (2.7,-2);
    \coordinate (223) at (1.7,-0.8);
    \coordinate (233) at (4.4,-0.8);
    \coordinate (234) at (3.2,1.2);
    \coordinate (224) at (1.2,1.4);
   
        \draw[color=cyan, fill=cyan, thick, opacity=0.5] (123)-- (124) -- (134) -- (133) -- cycle; 
    \draw [color=cyan!50!blue, thick, fill=cyan, opacity=0.5] (223)-- (233) -- (234) -- (224) -- cycle; 
    \draw [color=cyan!50!blue, thick, fill=cyan, opacity=0.5] (124)-- (224) -- (223) -- (123) -- cycle; 
    \draw [color=cyan!50!blue, thick, fill=cyan, opacity=0.5] (134)-- (234) -- (233) -- (133) -- cycle;  
    \draw [color=cyan!50!blue, thick, fill=cyan, opacity=0.5] (124)-- (134) -- (234) -- (224) -- cycle; 
    \draw [color=cyan!50!blue, thick, fill=cyan, opacity=0.5] (123)-- (223) -- (233) -- (133) -- cycle;
    \draw[dashed, thick, color=cyan!30!blue] (123) -- (223);
    \draw[dashed, thick, color=cyan!30!blue] (223) -- (233);
    \draw[dashed, thick, color=cyan!30!blue] (223) -- (224);
    \node[left] at (123) {$v_{3}$};
    \node[left] at (124) {$v_{2}$};
    \node[right] at (134) {$v_{1}$};
    \node[right] at (133) {$v_{4}$};
    \node[left] at (223) {$v_{7}$};
    \node[right] at (233) {$v_{6}$};
    \node[right] at (234) {$v_{5}$};
    \node[left] at (224) {$v_{6}$};
    \end{tikzpicture}
\end{minipage}

Now looking at the nonface subspace arrangement of $F_1$, we see that the nonfaces are indexed by $\tau_1=\{1,3\}$ and $\tau_2=\{2,4\}$ (nonface subspaces indexed by larger sets, say $\{1,2,3\}$, would of course be contained in those indexed by smaller ones, say $\tau_1$, so we don't include them separately). Thus we have 
$$L_{\tau_1} = \{(t_1,t_2,t_3) : \LL_1 = 0, \LL_3=0\} = \{(t_1,t_2,t_3): t_2=-t_3, t_1=-t_3+1\}$$
$$L_{\tau_2} = \{(t_1,t_2,t_3) : \LL_2 = 0, \LL_4=0\} = \{(t_1,t_2,t_3): t_2=t_3, t_1=1\}.$$
It is easy to check that the adjoint of $F_1$ vanishes on these subspaces, as claimed, so that the first term of $\alpha_P^s$ vanishes on $L_{\tau_1}$ and $L_{\tau_2}$. Now the remainder of the terms contain at least one of $\LL_1$ or $\LL_3$ and at least one of $\LL_2$ and $\LL_4$, each of which vanish on $L_{\tau_1}$ and $L_{\tau_2}$, respectively, by definition. Thus, the whole $\alpha_P^s$ vanishes on the nonface subspace arrangement $\mathcal{NF}(F_1)=L_{\tau_1}\cup L_{\tau_2}$, as expected.
\label{EX:cube}\end{example}

Notice that Proposition~\ref{PROP:surface_adjoint_vanish} is the analog of Corollary~\ref{COR:adjoint_vanish} for surface adjoints. It is then natural to ask if there is an analog of Theorem~\ref{THM:adjoint_unique} for surface adjoints.

\section{Minkowski tensors from their generating function} \label{sec:simplicial}

The strength of connecting Minkowski tensors to moments via a generating function is that we are now able to use generating function techniques to manipulate and access these (generally quite complicated) tensors. In this section we focus  on one technique for extracting coefficients from a generating function and show how it can be used to give an explicit expression for the Minkowski surface tensors of simplicial polytopes. 

In general, if we have a closed form multivariate generating function 
$$F(t_1,\ldots,t_d) = \sum_{i_1,\ldots,i_d\geq 0} f_{i_1\ldots i_d}t_1^{i_1}\cdots t_d^{i_d},$$ 
we can calculate the coefficient $f_{i_1\ldots i_d}$ by taking the partial derivative $\d^{i_1}/\d t_{1}^{i_1}\cdots\d^{i_d}/\d t_{d}^{i_d}$ divided by $i_1!\cdots i_d!$ and then evaluating at $(0,\cdots,0)$. 

\begin{example} If $F(x,y) = \frac{1}{1-x-y}$, then 
$$F(x,y) = 1 + (x+ y) +(x^2 +2xy+y^2)+(x+y)^3 + \cdots $$
and $f_{i,j} = \binom{i+j}{i}$. Now 
\begin{align*}
\frac{\d^j}{\d y^j} \frac{\d^i}{\d x^i} F(x,y) & = \frac{\d^j}{\d y^j} \frac{(-1)^i (-1)(-2)\cdots(-i)}{(1-x-y)^{i+1}} \\
	& = \frac{(i+j)!}{(1-x-y)^{i+j+1}}
\end{align*}
which when we evaluate at $(0,0)$ and divide by $i!j!$, we get $f_{i,j} = \frac{(i+j)!}{i!j!}$, as desired. 
\end{example}

By Theorem~\ref{THM:genfun} this means we can compute the rank $r+s$ surface tensor of $P$ {for fixed $s$ by taking all sets of $r$th order} partial derivatives of the generating function $MG_P(\bt)$.

\begin{example} 
Let $P$ be the quadrilateral given by $P = \conv\{x_1,\ldots, x_4\}$ with $x_1 = (0,0)$, $x_2=(1,0)$, $x_3=(1,1)$, and $x_4=(0,1)$. Since each facet is a $1$-dimensional simplex we have $Ad_{F_i}=1$ for $i=1,\ldots,4$, and furthermore $V_1(F_i) = 1$. So
\begin{align*}
MG_P(\bt) & = \sum_{r\geq 0} \frac{(r+1)!}{1!}\Phi^{r,s}_1(P)(\bt)\\
	& {= \frac{1}{s!\,\omega_{1+s}}\frac{\alpha_P^s(t_1,\ldots,t_4,u_1,\ldots, u_4)}{\LL_1\cdots\LL_4} }\\
	&  = \frac{1}{s!\,\omega_{1+s}}\frac{(1-t_1-t_2)(1-t_2)u_1^s + (1-t_2)u_2^s + (1-t_1)u_3^s + (1-t_1)(1-t_1-t_2)u_4^s}{1(1-t_1)(1-t_1-t_2)(1-t_2)} \\
	& = \frac{1}{s!\,\omega_{1+s}}\sum_{i=1}^4 \frac{u_{i}^s}{\LL_i\LL_{(i\bmod{4})+1}}.
\end{align*}

Now if we want the rank $1+s$ tensor we need the partial derivatives with respect to~$t_1$ and~$t_2$, which we calculate to be
\begin{align*}
\frac{\d}{\d t_j} MG_P(\bt) & = \frac{1}{s!\,\omega_{1+s}}\left(
\sum_{i=1}^4 \frac{-\left(\frac{\d}{\d t_j}\LL_i + \frac{\d}{\d t_j}\LL_{(i\bmod{4})+1}\right)u_i^s}{(\LL_i\LL_{(i\bmod{4})+1})^2} \right). 
\end{align*}
Each $\frac{\d}{\d t_j}\LL_i$ is simple to calculate and then evaluating at $\bt=0$ {(and trivially dividing by $1!0!$)}, we find 
$$\frac{\d}{\d t_1} MG_P(0) = \frac{1}{s!\omega_{1+s}}(u_1^s + 2u_2^s + u_3^s), \quad
\frac{\d}{\d t_2} MG_P(0) = \frac{1}{s!\omega_{1+s}}(u_2^s+2u_3^s+u_4^s). $$
We can verify our tensor calculation for $s=0$ as follows. The above derivatives will be the coefficient of each monomial of the $r=1$ term, $\frac{2!}{1!}\Phi^{1,0}_1(P)(\bt)$, {which we can further refine by writing as a sum of the coefficients of each $u_F^s$. Hence the $(1,0)$ and $(0,1)$ components of our tensor are as below} 
$$\Phi^{1,0}_1(P) = \frac{1}{2\,\omega_{1}}\left(
\begin{pmatrix} 1 \\0\end{pmatrix}u_1^0 + \begin{pmatrix} 2 \\1\end{pmatrix}u_2^0 + \begin{pmatrix} 1 \\2\end{pmatrix}u_3^0 + \begin{pmatrix} 0 \\1\end{pmatrix}u_4^0\right) = \begin{pmatrix} 1\\1 \end{pmatrix}.$$
Then calculating the Minkowski tensor from its definition, we have
\begin{align*}
\Phi^{1,0}_1(P) & = \frac{1}{1!\,\omega_{1}}\int_{\RR^2\times S^{1}}
\raisebox{1pt}{ \hspace{-15pt}$x^1$} \sum_{F\in\FF_{1}(P)}\int_{F}\int_{N(F,P)\cap S^{1}} \raisebox{1pt}{ \hspace{-35pt} $\mathbf{1}_{d(x,u)}(x,u)\HH^{0}(du)\HH^{1}(dx)$} \\
	& = \frac{1}{2}\int_{\RR^2\times S^1} x \sum_{F\in\FF_1(P)}\int_F \mathbf{1}_{d(x,u)}(x)\delta_{u_F} \HH^1(dx) \\
	& = \frac{1}{2}\sum_{F\in\FF_1(P)} \int_F x\, \HH^1(dx) \\
	& = \frac{1}{2} \int_{0}^1 \left[\begin{pmatrix} s \\ 0 \end{pmatrix} + \begin{pmatrix} 1 \\ s \end{pmatrix} + \begin{pmatrix} 1-s \\ 1 \end{pmatrix} + \begin{pmatrix} 0 \\ 1-s \end{pmatrix}\right] ds
\end{align*}
where the last equality comes from parametrizing each facet in variable $s$ which represents length (1-dimensional Hausdorff measure) along the facet. Now a simple integration gives the same result, $\begin{pmatrix}1\\1\end{pmatrix}$ as above. {To see the calculation of $\Phi^{2,0}_1(P)$ using Sage, see Appendix~\ref{sec:sage}.}
 
\end{example}

\subsection{Simplicial polytopes}
In fact, the above example is simply a special case of a simplicial polytope. In this case the adjoint of each facet is equal to $1$ so that the surface generating function has a particularly nice form. By deriving a formula for any $r$th order partial derivative of such a generating function, we get an explicit expression for Minkowski surface tensors of simplicial polytopes. 

Recall the generating function for our Minkowski tensors given in Theorem~\ref{THM:genfun} {in the simplicial case} is
$$MG_P(\bt) =  \frac{1}{s!\,\omega_{1+s}}\sum_{F\in\FF_{d-1}(P)} \frac{V_{d-1}(F)u_F^s}{\prod_{k\in F}\LL_k}. $$
Fix an integer $r\geq 1$. To simplify notation, we denote taking the derivative with respect to $t_i$ by $\d_i$ and given a sequence of indices $I = \{i_1,\ldots, i_r\}$ we denote by $\d_I$ the $r$th order partial derivative operator $\d_{i_r}\cdots\d_{i_1}$.
For each facet of $P$, we also write $\LL_F$ for the product $\prod_{k\in F}\LL_k$ in the denominator of each term of $MG_P(\bt)$.

\begin{prop} Let $P$ be a simplicial $d$-polytope, and $I=\{i_1,\ldots, i_r\}$ with each $i_j\in\{1,\ldots,d\}$. Then the $r$th order partial derivative $\d_I$ of the generating function for the surface tensors of $P$ is
\begin{align*}
{\frac{1}{s!\,\omega_{1+s}}\sum_{F\in\FF_{d-1}(P)}\hspace{-5pt}V_{d-1}(F)u_F^s \left[
\sum_{k=1}^{r} (-1)^{k}(k)!\frac{\LL_F^{r-k}}{\LL_F^{r+1}}\left(\sum_{\underset{I_1\dot\cup \cdots\dot\cup I_{k} = I}{\{I_1,\ldots, I_{k}\}}} \hspace{-10pt}\d_{I_1}\LL_F\cdots \d_{I_{k}}\LL_F \right)
\right],}
\end{align*}
{where the innermost sum is over all multiset partitions of $I$; that is, $I_1\cup\cdots\cup I_{k}=I$, $|I_1|+\cdots+|I_{k}| = |I|$, and changing the order of the sets $I_j$ doesn't change the partition}.
\label{PROP:deriv_formula}
\end{prop}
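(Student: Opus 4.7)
The plan is to induct on $r=|I|$. Since the prefactor $\tfrac{1}{s!\,\omega_{1+s}}$, the volumes $V_{d-1}(F)$, and the tensor factors $u_F^s$ carry no dependence on $\bt$, the statement reduces to proving, for each facet $F$, the identity
\begin{equation*}
\d_I\frac{1}{\LL_F} \;=\; \sum_{k=1}^r (-1)^k k!\,\frac{\LL_F^{r-k}}{\LL_F^{r+1}}\,T_k(I),
\end{equation*}
where $T_k(I)$ abbreviates the inner sum $\sum_{\{I_1,\ldots,I_k\}}\prod_j \d_{I_j}\LL_F$ over multiset partitions of $I$ into $k$ blocks. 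The base case $r=1$ is just $\d_i(1/\LL_F)=-\d_i\LL_F/\LL_F^2$, which coincides with the $k=1$ term.

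For the inductive step, I apply $\d_{i_{r+1}}$ to both sides and use the quotient rule termwise. Each summand $(-1)^k k!\,T_k(I)/\LL_F^{k+1}$ breaks into two pieces,
\begin{equation*}
(-1)^k k!\,\frac{\d_{i_{r+1}}T_k(I)}{\LL_F^{k+1}} \quad\text{and}\quad (-1)^{k+1}(k+1)!\,\frac{\d_{i_{r+1}}\LL_F\cdot T_k(I)}{\LL_F^{k+2}}.
\end{equation*}
Writing $I':=I\cup\{i_{r+1}\}$, reindexing the second family via $k\mapsto k-1$, and collecting the common power of $\LL_F$ reduces the induction to the combinatorial identity
\begin{equation*}
T_k(I') \;=\; \d_{i_{r+1}}T_k(I) + \d_{i_{r+1}}\LL_F\cdot T_{k-1}(I),
\end{equation*}
which expresses the dichotomy that every partition of $I'$ into $k$ blocks either places $i_{r+1}$ inside an existing block of a partition of $I$ into $k$ blocks (each such insertion produced by Leibniz applied to $\prod_j\d_{I_j}\LL_F$) or sets $\{i_{r+1}\}$ aside as a singleton appended to a partition of $I$ into $k-1$ blocks (yielding the factor $\d_{i_{r+1}}\LL_F$ times $T_{k-1}(I)$).

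The main obstacle is making the combinatorial identity airtight when $I$ has repeated indices, since then the map from set partitions of the positions $\{1,\ldots,r\}$ to multiset partitions of $I$ is many-to-one and one must be careful that multiplicities agree on both sides. A clean way to handle this is to lift $T_k$ to a sum over set partitions of $\{1,\ldots,r\}$, each partition $\{B_1,\ldots,B_k\}$ contributing $\prod_j \d_{I_{B_j}}\LL_F$ with $I_B:=\{i_j:j\in B\}$; under this lift, Leibniz unambiguously distributes $\d_{i_{r+1}}$ across the $k$ factors of each summand and this corresponds exactly to inserting the new position $r+1$ into one of the $k$ blocks, while adjoining $\{r+1\}$ as a new singleton block to a partition of $\{1,\ldots,r\}$ into $k-1$ blocks accounts for the other term. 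Descending back to the multiset-partition notation in the statement is then simply an aggregation of summands with identical block contents, after which the induction closes with matching multiplicities on both sides.
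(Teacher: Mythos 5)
Your proof follows the same route as the paper: induction on $r=|I|$, the quotient rule, and the observation that a partition of $I\cup\{i_{r+1}\}$ into $k$ blocks arises either by inserting $i_{r+1}$ into a block of a $k$-block partition of $I$ (the Leibniz term) or by adjoining $\{i_{r+1}\}$ as a new block to a $(k-1)$-block partition of $I$. The one place you go further is the explicit lift of $T_k$ to set partitions of the index positions $\{1,\dots,r\}$, and this is a genuine improvement: as stated, the proposition's inner sum ``over all multiset partitions of $I$'' (each counted once) would be off by a multiplicity factor whenever $I$ has repeated indices --- e.g.\ for $I=\{1,1,1\}$ the $k=2$ term should be $6\,\partial_1\LL_F\,\partial_1^2\LL_F/\LL_F^3$, but the single multiset partition $\{\{1\},\{1,1\}\}$ would give only $2\,\partial_1\LL_F\,\partial_1^2\LL_F/\LL_F^3$. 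The paper's inductive step implicitly treats the $I_j$ as blocks of positions, which is exactly your lift; spelling that out, as you do, is what makes the combinatorial identity $T_k(I')=\partial_{i_{r+1}}T_k(I)+\partial_{i_{r+1}}\LL_F\cdot T_{k-1}(I)$ hold with the right multiplicities.
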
 

\begin{proof} 
{\color{black}
       We do induction on the number of indices of $I=\left\{ i_{1},\cdots,i_{r} \right\}$.

       For $r=1$,  $I=\left\{ i \right\}$ ,
       \begin{equation*}
         \begin{split}
         \d_i MG_{P}(\bt)
         &=\frac{1}{s!w_{s+1}}\sum_{F\in \mathcal{F}_{d-1}(P)}\d_i \left(  \frac{V_{d-1}(F)u_{F}^{s}}{ \LL_{F}}\right)\\ & =\frac{1}{s!w_{s+1}}\sum_{F\in \mathcal{F}_{d-1}(P)}V_{d-1}(F)u_{F}^{s}\left[(-1)1!  \frac{\LL_{F}^{0}}{\LL_{F}^{2}}\left(  \d_{i}\LL_{F}\right)\right]
         \end{split}
       \end{equation*} so that the formula holds.
       
       Now we assume that the formula holds for $I=\left\{ i_{1},\cdots,i_{r} \right\}$, and then take the derivative with respect to $t_{i_{r+1}}$.
       \begin{equation*}
         \begin{split}           
           &\d_{i_{r+1}}\d_{I}MG_{P}(\bt)\\
          =&\frac{1}{s!w_{s+1}}\sum_{F\in \mathcal{F}_{d-1}(P)}V_{d-1}(F)u_{F}^{s} \d_{i_{r+1}}\left[ \sum_{k=1} ^{r}(-1)^{k}(k)!\frac{\LL_{F}^{r-k}}{\LL_{F}^{r+1}}\left( \sum_{I_{1}\cup\cdots\cup I_{k}=I} \d_{I_{1}}\LL_{F}\cdots\d_{I_{k}}\LL_{F}\right)\right].
           \end{split}
         \end{equation*}
        Now it suffices to show that we get the correct expression for each summand corresponding to some facet $F$. 
         Then,
         \begin{align*}
            \d_{i_{r+1}}\left[ \sum_{k=1} ^{r}(-1)^{k}(k)!\frac{\LL_{F}^{r-k}}{\LL_{F}^{r+1}}\left( \sum_{I_{1}\cup\cdots\cup I_{k}=I} \d_{I_{1}}\LL_{F}\cdots\d_{I_{k}}\LL_{F}\right)\right] \\
             &\hspace{-230pt} =\sum_{k=1} ^{r}(-1)^{k}(k)!\,\d_{i_{r+1}}\left(  \frac{\LL_{F}^{r-k}}{\LL_{F}^{r+1}}\left( \sum_{I_{1}\cup\cdots\cup I_{k}=I} \d_{I_{1}}\LL_{F}\cdots\d_{I_{k}}\LL_{F}\right)\right)\\
             &\hspace{-230pt} =\sum_{k=1} ^{r}(-1)^{k}(k)! \left[ \left(  \frac{\LL_{F}^{r+1}(r-k)\mathcal{L}_{F}^{r-k-1}\d_{i_{r+1}}\LL_{F}-\LL_{F}^{r-k}(r+1)\LL_{F}^{r}\d_{i_{r+1}}\LL_{F}}{\LL_{F}^{2r+2}}\right)\left( \sum_{I_{1}\cup\cdots\cup I_{k}=I} \d_{I_{1}}\LL_{F}\cdots\d_{I_{k}}\LL_{F}\right)\right.\\
             &\hspace{-230pt}\phantom{=}\left.+\frac{\LL_{F}^{r-k}}{\LL_{F}^{r+1}}\left( \d_{i_{r+1}}\sum_{I_{1}\cup\cdots\cup I_{k}=I} \d_{I_{1}}\LL_{F}\cdots\d_{I_{k}}\LL_{F}\right)\right]\\
             &\hspace{-230pt}=\sum_{k=1}^{r}(-1)^{k}(k)! \frac{(-k-1)\LL_{F}^{2r-k}\d_{i_{r+1}}\LL_{F} }{\LL_{F}^{2r+2}} \left( \sum_{I_{1}\cup\cdots\cup I_{k}=I} \d_{I_{1}}\LL_{F}\cdots\d_{I_{k}}\LL_{F}\right)\\
             &\hspace{-230pt}\phantom{=}+\sum_{k=1}^{r}(-1)^{k}(k)!\frac{\LL_{F}^{r-k}}{\LL_{F}^{r+1}}\left( \d_{i_{r+1}}\sum_{I_{1}\cup\cdots\cup I_{k}=I} \d_{I_{1}}\LL_{F}\cdots\d_{I_{k}}\LL_{F}\right)\\
             &\hspace{-230pt}=\sum_{k=1} ^{r}(-1)^{k+1}(k+1)!\frac{\LL_{F}^{r-k}}{\LL_{F}^{r+2}}\left[ \left( \d_{i_{r+1}}\LL_{F}\right)\left( \sum_{I_{1}\cup\cdots\cup I_{k}=I} \d_{I_{1}}\LL_{F}\cdots\d_{I_{k}}\LL_{F}\right)\right.\\
              &\hspace{-230pt}\phantom{=}+\sum_{k=1} ^{r}(-1)^{k}(k)!\frac{\LL_{F}^{r-k+1}}{\LL_{F}^{r+2}} \left( \sum_{I_{1}\cup\cdots\cup I_{k}=I} \d_{i_{r+1}}\left(  \d_{I_{1}}\LL_{F}\cdots\d_{I_{k}}\LL_{F}\right)\right) \\
             &\hspace{-230pt} =\sum_{k=1} ^{r}(-1)^{k+1}(k+1)!\frac{\LL_{F}^{r-k}}{\LL_{F}^{r+2}} \left( \sum_{I_{1}\cup\cdots\cup I_{k}\cup \left\{ i_{r+1} \right\}=I\cup\left\{ i_{r+1} \right\}} \d_{I_{1}}\LL_{F}\cdots\d_{I_{k}}\LL_F\d_{i_{r+1}}\LL_{F}\right)\\
             &\hspace{-230pt}\phantom{=} +\sum_{k=1} ^{r}(-1)^{k}(k)!\frac{\LL_{F}^{r-k+1}}{\LL_{F}^{r+2}} \left( \sum_{\underset{I_j' = I_j \mbox{ \tiny or } I_j'=I_j\cup\{i_{r+1}\}}{I'_{1}\cup\cdots\cup I'_{k}=I\cup\left\{ i_{r+1} \right\}}} \left(  \d_{I'_{1}}\LL_{F}\cdots\d_{I'_{k}}\LL_{F}\right)\right) \\
             &\hspace{-230pt} =\sum_{k=1} ^{r}(-1)^{k+1}(k+1)!\frac{\LL_{F}^{r-k}}{\LL_{F}^{r+2}} \left( \sum_{I_{1}\cup\cdots\cup I_{k}\cup \left\{ i_{r+1} \right\}=I\cup\left\{ i_{r+1} \right\}} \d_{I_{1}}\LL_{F}\cdots\d_{I_{k}}\LL_F\d_{i_{r+1}}\LL_{F}\right)\\
             &\hspace{-230pt}\phantom{=} +\sum_{k=0} ^{r-1}(-1)^{k+1}(k+1)!\frac{\LL_{F}^{r-k}}{\LL_{F}^{r+2}} \left( \sum_{I'_{1}\cup\cdots\cup I'_{k+1}=I\cup\left\{ i_{r+1} \right\}} \left(  \d_{I'_{1}}\LL_{F}\cdots\d_{I'_{k+1}}\LL_{F}\right)\right) \\
            &\hspace{-230pt}= \sum_{k=1} ^{r}(-1)^{k+1}(k+1)!\frac{\LL_{F}^{r-k}}{\LL_{F}^{r+2}}\left( \sum_{I_{1}\cup\cdots\cup I_{k+1}=I\cup\{i_{r+1}\}} \d_{I_{1}}\LL_{F}\cdots\d_{I_{k+1}}\LL_{F}\right) \\
          \end{align*}
         {where the last equality comes from the fact that a set partition of $I\cup\{i_{r+1}\}$ can have one of two forms: a partition of $I$ to which we add the extra set $\{i_{r+1}\}$, or a partition of $I$ where we add $i_{r+1}$ to one of the existing sets. This then} completes the induction.
         }
       \end{proof}

\begin{example}Let $P$ be a tetrahedron. Then its facets are indexed by all size three subsets of $\{1,2,3,4\}$, and so

\begin{align*}
MG_P(\bt) &= \frac{1}{s!\,\omega_{1+s}}\sum_{F\in \binom{[4]}{3}} \frac{V_{2}(F)u_{F}^s}{\LL_F} \\
\d_j MG_P(\bt) & = \frac{1}{s!\,\omega_{1+s}}\sum_{F\in \binom{[4]}{3}} {V_{2}(F)u_{F}^s} \frac{-\d_j \LL_F}{\LL_F^2} \\
\d_k\d_j MG_P(\bt) & = \frac{1}{s!\,\omega_{1+s}}\sum_{F} {V_{2}(F)u_{F}^s} \frac{\LL_F^2(-\d_k\d_j \LL_F) - (-\d_j\LL_F(2\LL_F\d_k \LL_F))}{\LL_F^4} \\
	& = \frac{1}{s!\,\omega_{1+s}}\sum_{F} {V_{2}(F)u_{F}^s} \frac{2\d_k\LL_F\d_j\LL_F - \LL_F\d_k\d_j\LL_F}{\LL_F^3} \\
\d_m\d_k\d_j MG_P(\bt) & = \frac{1}{s!\,\omega_{1+s}}\sum_F V_2(F)u_F^s \left( \frac{\LL_F^3(2\d_m\d_k\LL_F\d_j\LL_F + 2\d_m\d_j\LL_F\d_k\LL_F - \d_m\LL_F\d_k\d_j\LL_F - \LL_F\d_m\d_k\d_j \LL_F }{\LL_F^6}\right. \\
& \hspace{100pt} \left.\frac{- (2\d_k\LL_F\d_j\LL_F - \LL_F\d_k\d_j\LL_F)3\LL_F^2\d_m\LL_F}{\LL_F^6}\right) \\
& = \frac{1}{s!\,\omega_{1+s}}\sum_F V_2(F)u_F^s \left(\frac{2\LL_F(\d_m\d_k\LL_F\d_j\LL_F+\d_m\d_j\LL_F\d_k\LL_F + \d_k\d_j\LL_F\d_m\LL_F) }{\LL_F^4}\right.\\ 
& \hspace{100pt} \left.\frac{-6\d_m\LL_F\d_k\LL_F\d_j\LL_F- \LL_F^2\d_m\d_k\d_j\LL_F}{\LL_F^4}\right) \\
& = \frac{1}{s!\,\omega_{1+s}}\sum_F V_2(F)u_F^s
\left[
\sum_{i=1}^{3} (-1)^{i}(i)!\frac{\LL_F^{3-i}}{\LL_F^{4}}\left(\sum_{\underset{I_1\cup \cdots\cup I_{i} = \{j,k,m\}}{\{I_1,\ldots,I_{i}\}}} \hspace{-10pt}\d_{I_1}\LL_F\cdots \d_{I_{i}}\LL_F \right)
\right].
\end{align*}
\label{EX:tetrahedron}
\end{example}

Since each $\LL_F$ is a product of linear polynomials, it is not hard to calculate $\d_I\LL_F$. For $I = \{i_1,\ldots,i_r\}$ and $F$ with vertices indexed by $\{p_1,\ldots,p_{d}\}$, we have  
\begin{equation}
\d_I\LL_F = \begin{cases} 0 & \text{if } r > d \\
		\displaystyle\sum_{J=\{j_1,\ldots,j_r\}\subset\binom{[d]}{r}}\sum_{\sigma\in\frak{S}_{r}}\left(\d_{i_1}\LL_{p_{j_{\sigma(1)}}}\cdots \d_{i_r}\LL_{p_{j_{\sigma(r)}}}
		\prod_{k\notin J}\LL_k\right) & \text{else} \end{cases}
\label{EQ:Lpartial}\end{equation}
Then using the fact that $\d_j\LL_i = -x_{i,j}$ and $\LL_F(0,\cdots, 0) = 1$ for any $F$, we get the following. 
\begin{equation}
\d_I\LL_F(0,\ldots,0) = \begin{cases} 0 & \text{if } r > d \\
		\displaystyle\sum_{J=\{j_1,\ldots,j_r\}\subset\binom{[d]}{r}}\sum_{\sigma\in\frak{S}_{r}}(-1)^rx_{p_{j_{\sigma(1)}},i_1}\cdots x_{p_{j_{\sigma(r)}},i_r} & \text{else} \end{cases}
\label{EQ:Lpartialx}\end{equation}

The reader familiar with symmetric functions may now see that the above expression is reminiscent of the definition of an elementary symmetric function. We review that definition now and make precise the connection to the above derivatives. 

{
The elementary symmetric function $e_k(x_1,\ldots, x_m)$ of degree $k$ in $m$ variables is defined as follows
$$e_k(x_1,\ldots,x_m) = \sum_{1\leq j_1 < \cdots < j_k \leq m} x_{j_1}\cdots x_{j_k}.$$
So for example,
$$e_2(x_1,x_2,x_3) = x_1x_2+x_1x_3+x_2x_3.$$
By construction $e_k(x_1,\ldots,x_m)$ is invariant under the action of the symmetric group~$\frak{S}_m$, where $\sigma$ acts on $e_k$ by permuting the variables, $\sigma\cdot e_k(x_1,\ldots, x_m) = e_k(x_{\sigma(1)},\ldots, x_{\sigma(m)})$. This means that we can also write
$$(m-k)!k!\cdot e_k(x_1,\ldots,x_m) = \sum_{\sigma\in\frak{S}_m} x_{\sigma(1)}\cdots x_{\sigma(k)}.$$

Now we notice that we can rewrite  \eqref{EQ:Lpartialx} using the same notation to get 
$$\d_I \LL_F (0,\ldots,0) = \begin{cases} 0 & \text{if } r > d \\
		\displaystyle\frac{1}{(d-r)!}\sum_{\sigma\in\frak{S}_{d}} (-1)^r x_{p_{\sigma(1)},i_1} \cdots x_{p_{\sigma(r)},i_r} & \text{else}
 \end{cases}
$$
which is very close to the expression for the elementary symmetric function $e_r(x_1,\ldots,x_d)$ except that now we have double indexed variables. 
{\begin{defn}
Denote by $x_1,\ldots,x_d$ a set of variables $x_{1,1},\ldots, x_{1,n},\ldots, x_{d,1}\ldots,x_{d,n}$. Let $I = \{i_1,\ldots,i_k\}$ with each $i_j\in[n]$ and $k\leq d$. Then denote by $e_k^I(x_1,\ldots,x_d)$ the following doubly indexed ``elementary symmetric function''
$$(d-k)!\cdot e_k^I(x_1,\ldots,x_d) = \sum_{\sigma\in\frak{S}_d} x_{\sigma(1),i_1}\cdots x_{\sigma(k),i_k}.$$ 
\label{DEF:doub_elem_sym}\end{defn}}
\noindent{Notice that $e_k^I$ is also invariant under the action of $\mathfrak{S}_d$, {where $\mathfrak{S}_d$ acts by $\sigma\cdot e_k^I(x_1,\ldots,x_d) = e_k^I(x_{\sigma(1)},\ldots,x_{\sigma(d)}).$ Note that in what remains will always use $n=d$.}
\begin{prop} Let $F$ be a facet of simplicial polytope $P =\conv\{x_1,\ldots, x_m\}$ with $\LL_F = \prod_{k\in F}\LL_k$. Then for $I = \{i_1,\ldots,i_r\}$, the $r$th order partial derivative $\d_{i_r}\cdots \d_{i_1} \LL_F$ for $r\leq d$ evaluated at zero can be written as follows.
$$\d_I\LL_F(0,\ldots,0) =  (-1)^re_r^I(x_k : k\in F).$$
\label{PROP:partial_via_elemsymfn}
\end{prop}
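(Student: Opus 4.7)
The plan is to reduce the statement to a direct combinatorial identity by unwinding both sides. Since $F$ is a facet of a simplicial $d$-polytope, it is itself a $(d-1)$-simplex and therefore has exactly $d$ vertices, so the elementary symmetric function $e_r^I$ applied to $(x_k : k\in F)$ makes sense in the indexing convention of Definition~\ref{DEF:doub_elem_sym}. For $r\leq d$, equation~\eqref{EQ:Lpartialx} already gives the explicit formula
\[
\d_I \LL_F(0,\ldots,0) \;=\; (-1)^r \sum_{J\in\binom{F}{r}} \sum_{\sigma\in\frak{S}_r} x_{p_{j_{\sigma(1)}},i_1}\cdots x_{p_{j_{\sigma(r)}},i_r},
\]
where $F=\{p_1,\ldots,p_d\}$ and $J=\{j_1<\cdots<j_r\}$ ranges over $r$-subsets of indices of vertices of $F$. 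So after pulling out the sign $(-1)^r$, what remains is to show that the double sum on the right agrees with $e_r^I(x_k : k\in F)$.

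Next I would invoke Definition~\ref{DEF:doub_elem_sym} directly, which presents $e_r^I$ as
\[
(d-r)!\cdot e_r^I(x_{p_1},\ldots,x_{p_d}) \;=\; \sum_{\sigma\in\frak{S}_d} x_{p_{\sigma(1)},i_1}\cdots x_{p_{\sigma(r)},i_r}.
\]
The key combinatorial step is the bijection between $\frak{S}_d$ and the set of pairs consisting of an ordered $r$-tuple of distinct elements of $[d]$ together with a permutation of the remaining $d-r$ elements. Under this bijection, only the first $r$ values of $\sigma\in\frak{S}_d$ matter to the monomial, and each ordered $r$-tuple $(j_{\sigma'(1)},\ldots,j_{\sigma'(r)})$ of distinct elements (parametrized by a choice of $r$-subset $J\in\binom{[d]}{r}$ and a permutation $\sigma'\in\frak{S}_r$) is the prefix of exactly $(d-r)!$ permutations of $[d]$. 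Substituting and cancelling the $(d-r)!$ factors yields
\[
e_r^I(x_{p_1},\ldots,x_{p_d}) \;=\; \sum_{J\in\binom{[d]}{r}} \sum_{\sigma'\in\frak{S}_r} x_{p_{j_{\sigma'(1)}},i_1}\cdots x_{p_{j_{\sigma'(r)}},i_r},
\]
which is exactly the sum appearing above, so the claim follows after reinserting the factor $(-1)^r$.

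There is no serious obstacle here; the content is really a bookkeeping exercise between \eqref{EQ:Lpartialx} and Definition~\ref{DEF:doub_elem_sym}. The only mild care needed is tracking the doubly-indexed variables $x_{k,i}$ (so that permutations act on the \emph{first} index while the sequence $I$ fixes the second indices in order) and verifying the counting factor $(d-r)!$, which comes from freely permuting the ``unused'' vertices of $F$. This is what makes $e_r^I$ the correct analog of the classical elementary symmetric function in this setting.
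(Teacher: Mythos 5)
Your proof is correct and follows the same route the paper takes: the paper states the proposition without a formal proof, merely remarking after \eqref{EQ:Lpartialx} that the expression "can be rewritten" as $\frac{1}{(d-r)!}\sum_{\sigma\in\mathfrak{S}_d}(-1)^r x_{p_{\sigma(1)},i_1}\cdots x_{p_{\sigma(r)},i_r}$ and then defining $e_r^I$ to match. Your bijection between $\mathfrak{S}_d$ and pairs (ordered $r$-tuple of distinct indices, arbitrary arrangement of the remaining $d-r$), with the resulting $(d-r)!$ overcounting, is exactly the bookkeeping the paper leaves implicit.
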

}

To simpllify notation we define $e^I(F) := e_r^I(x_k : k\in F)$, where $r$ is implicitly given as $|I|$. We then have the following result which gives us the components of the Minkowski tensors. 
\begin{corollary} Let $I$ range over all possible choices {$I = \{i_1 \leq i_2\leq\cdots \leq i_r\}$} 
with each $i_j\in \{1,\ldots, d\}$. Denote by $a(I)_j$ the multiplicity of $j$ in $I$ for each $j\in[d]$. Then for a fixed $s\geq 0$ the rank $r+s$ Minkowski surface tensor of simplicial $d$-polytope $P=\conv\{x_1,\ldots,x_m\}$ is 
$$\Phi_{d-1}^{r,s}(P) = \frac{(d-1)!}{(r+d-1)!s!\,\omega_{1+s}}\sum_{I} \frac{c_I}{a(I)_1!\cdots a(I)_d!} e_{1}^{a(I)_1}\otimes\cdots\otimes e_d^{a(I)_d},$$ where the $c_I$ are given below. 
$$c_I := \sum_{F\in\FF_{d-1}(P)} V_{d-1}(F)u_F^s \left[
\sum_{k=1}^{r} (-1)^{k+r}k!\left(\sum_{\stackrel{I_1,\ldots,I_{k}}{\stackrel{I_1\cup \cdots\cup I_{k} = I}{|I_j|\leq d}}} \prod_{j=1}^{k} e^{I_j}(F) \right)\right]
$$
{where the innermost sum is over all multiset partitions of $I$; that is, $I_1\cup\cdots\cup I_{k}=I$, $|I_1|+\cdots+|I_{k}| = |I|$, and changing the order of the sets $I_j$ doesn't change the partition}.
\label{COR:simplicial_tensor_formula} 
\end{corollary}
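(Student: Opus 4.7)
The plan is to extract the coefficient of $t_1^{a_1}\cdots t_d^{a_d}$ from the generating function $MG_P(\bt)$ via the standard partial-derivative-at-zero technique, then translate this scalar coefficient back into a tensor component. Since $P$ is simplicial, each facet $F$ has $d$ vertices, so $\LL_F$ is a product of $d$ linear forms and $\LL_F(0)=1$. This will make evaluating the rational expressions at the origin essentially trivial; the content of the proof is bookkeeping.

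First I would fix a multiset $I=\{i_1\le\cdots\le i_r\}$ with multiplicities $a(I)_1,\ldots,a(I)_d$, and compute the coefficient of $t_1^{a(I)_1}\cdots t_d^{a(I)_d}$ in $MG_P(\bt)$ as $\tfrac{1}{a(I)_1!\cdots a(I)_d!}\,\d_I MG_P(\bt)\big|_{\bt=0}$. Proposition~\ref{PROP:deriv_formula} gives the closed form
\[
\d_I MG_P(\bt) \;=\; \frac{1}{s!\,\omega_{1+s}}\sum_{F}V_{d-1}(F)u_F^s\!\left[\sum_{k=1}^{r}(-1)^{k}k!\,\frac{\LL_F^{r-k}}{\LL_F^{r+1}}\sum_{I_1\dot\cup\cdots\dot\cup I_k=I}\d_{I_1}\LL_F\cdots\d_{I_k}\LL_F\right]\!.
\]
Setting $\bt=0$, the rational prefactor $\LL_F^{r-k}/\LL_F^{r+1}$ collapses to $1$ because $\LL_F(0)=1$, and Proposition~\ref{PROP:partial_via_elemsymfn} gives $\d_{I_j}\LL_F(0)=(-1)^{|I_j|}e^{I_j}(F)$, with the convention that $e^{I_j}(F)=0$ whenever $|I_j|>d$ (this is where the constraint $|I_j|\le d$ in $c_I$ comes from). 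Since $|I_1|+\cdots+|I_k|=|I|=r$ for every multiset partition, the product of signs collapses to $(-1)^{\sum|I_j|}=(-1)^r$, giving $(-1)^k(-1)^r=(-1)^{k+r}$ overall. Combining these pieces identifies $\d_I MG_P(\bt)\big|_{\bt=0}$ with exactly $c_I/(s!\,\omega_{1+s})$.

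Next I would convert this coefficient in the generating function back to a coefficient in the tensor. By Theorem~\ref{THM:genfun} (in the form used in Proposition~\ref{PROP:graded_gen_fun}), the rank $r+s$ surface tensor sits as the degree-$r$ graded piece of $MG_P(\bt)$ scaled by $(r+d-1)!/(d-1)!$. Hence the coefficient of $t_1^{a(I)_1}\cdots t_d^{a(I)_d}$ in the polynomial $\Phi_{d-1}^{r,s}(P)(\bt)$ equals
\[
\frac{(d-1)!}{(r+d-1)!}\cdot\frac{c_I}{a(I)_1!\cdots a(I)_d!\,s!\,\omega_{1+s}}.
\]
By the convention converting tensor polynomials back to tensors (replacing each monomial $t_1^{a_1}\cdots t_d^{a_d}$ with $e_1^{a_1}\otimes\cdots\otimes e_d^{a_d}$), this is the desired coefficient in the formula for $\Phi_{d-1}^{r,s}(P)$. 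Finally summing over all multisets $I$ with $|I|=r$ yields the stated expression.

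The only genuinely delicate point is verifying the sign and combinatorial bookkeeping: that the factor $(-1)^r$ from the partition product combines correctly with $(-1)^k$ from Proposition~\ref{PROP:deriv_formula} to give $(-1)^{k+r}$, and that the restriction $|I_j|\le d$ in $c_I$ matches the vanishing of $\d_{I_j}\LL_F$ when $|I_j|$ exceeds the number of linear factors of $\LL_F$. Everything else reduces to Theorem~\ref{THM:genfun} and an application of Propositions~\ref{PROP:deriv_formula} and~\ref{PROP:partial_via_elemsymfn} at the origin.
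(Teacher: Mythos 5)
Your proof is correct and follows essentially the same route as the paper: express the coefficient of $t_1^{a(I)_1}\cdots t_d^{a(I)_d}$ in $MG_P(\bt)$ as $\tfrac{1}{a(I)_1!\cdots a(I)_d!}\,\partial_I MG_P\big|_{\bt=0}$, apply Proposition~\ref{PROP:deriv_formula}, evaluate at the origin using $\LL_F(0)=1$ and Proposition~\ref{PROP:partial_via_elemsymfn}, and collect the sign $(-1)^k\cdot(-1)^{|I_1|+\cdots+|I_k|}=(-1)^{k+r}$. The only stylistic difference is that the paper indexes coefficients by exponent tuples $(j_1,\ldots,j_d)$ with $j_1+\cdots+j_d=r$ rather than by the multiset $I$ directly, but these are in bijection and the bookkeeping is identical.
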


{\color{black}
\begin{proof} Recall our generating function has the form
\begin{align*}
\sum_{r\geq 0} \frac{(r+d-1)!}{(d-1)!}\Phi_{d-1}^{r,s}(P)(\bt) & =
 \frac{1}{s!\,\omega_{1+s}}\sum_{F\in\FF_{d-1}(P)} \frac{V_{d-1}(F)\Ad_F(t_1,\ldots,t_d)u_F^s}{\prod_{k\in F}\LL_k} \\
 \end{align*}
 so that 
 $$\Phi_{d-1}^{r,s}(P) = \frac{(d-1)!}{(r+d-1)!}\sum_{\underset{j_1+\cdots+j_d=r}{j_1,\ldots,j_d\geq 0}}f_{j_1,\ldots,j_d} e_1^{j_1}\otimes\cdots\otimes e_d^{j_d},$$
 where $f_{j_1,\ldots,j_d}$ is the coefficient of the monomial $t_1^{j_1}\cdots t_d^{j_d}$ in the generating function. Then using Proposition~\ref{PROP:deriv_formula}, we have 
\begin{align*}f_{j_1,\ldots,j_d} & = \frac{1}{j_1!\cdots j_d!}\frac{\d^{j_1}}{\d t_{1}^{j_1}}\cdots \frac{\d^{j_d}}{\d t_{d}^{j_d}} MG_P\,\raisebox{-8pt}{\rule{0.5pt}{20pt}}_{(0,\ldots,0)} \\
 	& = \frac{1}{j_1!\cdots j_d!s!\,\omega_{1+s}}\sum_{F\in\FF_{d-1}(P)}\hspace{-5pt}V_{d-1}(F)u_F^s \left[
\sum_{k=1}^{r} (-1)^{k}(k)!\frac{\LL_F^{r-k}}{\LL_F^{r+1}}\left(\sum_{\underset{I_1\cup \cdots\cup I_{k} = I}{\{I_1,\ldots, I_{k}\}}} \hspace{-10pt}\d_{I_1}\LL_F\cdots \d_{I_{k}}\LL_F \right)
\right]\raisebox{-20pt}{\rule{0.5pt}{45pt}}_{(0,\ldots,0)}, \\
\end{align*}
where $I = \{\underbrace{1,\ldots,1}_{j_1 \mbox{\tiny times}},\ldots, \underbrace{d,\ldots,d}_{j_d \mbox{\tiny times}}\}$. Finally using Proposition~\ref{PROP:partial_via_elemsymfn} we get
\begin{multline*}
f_{j_1,\ldots,j_d} = \\
\frac{1}{j_1!\cdots j_d!s!\,\omega_{1+s}}\sum_{F\in\FF_{d-1}(P)}\hspace{-5pt}V_{d-1}(F)u_F^s \left[
\sum_{k=1}^{r} (-1)^{k}(k)!\left(\sum_{\underset{\underset{|I_\ell|\leq d}{I_1\cup \cdots\cup I_{k} = I}}{\{I_1,\ldots, I_{k}\}}} \hspace{-10pt}(-1)^{|I_1|}e^{I_1}(F)\cdots (-1)^{|I_{k}|}e^{I_{k}}(F) \right)
\right],
\end{multline*}
which, since $j_1+\cdots+j_d=r=|I|$ and $|I_1|+\cdots+|I_{k}| = |I|$, simplifies to the given expression. 
\end{proof}
}

\begin{example}Continuing with Example~\ref{EX:tetrahedron}, if we want the rank $3+s$ surface tensors then 
we use the $3$rd order partial derivatives we found earlier
$$\d_m\d_k\d_j MG_P(\bt) = \frac{1}{s!\,\omega_{1+s}}\sum_F V_2(F)u_F^s
\left[
\sum_{i=1}^{3} (-1)^{i}(i)!\frac{\LL_F^{3-i}}{\LL_F^{4}}\left(\sum_{\underset{I_1\cup \cdots\cup I_{i} = \{j,k,m\}}{\{I_1,\ldots,I_{i}\}}} \hspace{-10pt}\d_{I_1}\LL_F\cdots \d_{I_{i}}\LL_F \right)
\right],$$
which when we evaluate at $\mathbf{0}$ give the $c_I$ of Corollary~\ref{COR:simplicial_tensor_formula} for $I = \{j,k,m\}$.
$$c_{j,k,m} = \sum_{F\in\binom{[4]}{3}} V_{2}(F)u_F^s \left[
\sum_{i=1}^{3} (-1)^{i+3}i!\left(\sum_{\stackrel{I_1,\ldots,I_{i}}{\stackrel{I_1\cup \cdots\cup I_{i} = \{j,k,m\}}{|I_\ell|\leq 3}}} \prod_{\ell=1}^{i} e^{I_\ell}(F) \right)\right]
$$
Finally, we sum over all choices of $I = \{j\leq k\leq m\}$ with $j,k,m\in\{1,2,3\}$, to get 
$$
\Phi_{2}^{3,s}(P) = \frac{2!}{5!s!\,\omega_{1+s}}\sum_{I = \{j,k,m\}} \frac{c_{j,k,m}}{a(I)_1!\cdots a(I)_d!} e_{1}^{a(I)_1}\otimes\cdots\otimes e_3^{a(I)_3}.
$$
Then denoting a set $\{1,1,1\}$ by $111$, we have one term for each $I\in\{111,112,113,122,123,133,222$, $223,233,333\}$, and, for example, the $I = 113$ term of the tensor would be $\displaystyle\frac{2!}{5!s!\,\omega_{1+s}}\frac{c_{1,1,3}}{2!1!}e_1^2\otimes e_3$, where $c_{1,1,3}$ is given below\vspace{-3pt}
\begin{multline*} 
\sum_F V_2(F)u_F^s \left( e^{\{1,1,3\}}(F) - 2e^{\{1\}}(F)e^{\{1,3\}}(F) - 2e^{\{1\}}(F)e^{\{1,3\}}(F) -2e^{\{3\}}(F)e^{\{1,1\}}(F)\right. \\[-5pt]
\left.+ 6e^{\{1\}}(F)e^{\{1\}}(F)e^{\{3\}}(F) \right).
\end{multline*}
\end{example}

\begin{example} Consider an octahedron with realization $P = \conv\{\pm e_1,\pm e_2,\pm e_3\}$, where $e_i$ are the standard basis vectors in $\RR^3$. {Let $F$ range over the set of facets of $P$, $\{\conv(v_1e_1,v_2e_2,v_3e_3)\}_{v\in\{\pm1\}^3}$.} Then the rank $1$ surface tensor with $s=0$ would be 
$$\Phi_2^{1,0}(P) = \frac{2!}{3!\omega_1}\sum_{j=1}^3\left(\sum_{{F}} \frac{V_2(F)}{1!}\, {(-1)^{1+1}}{\left(v_1(e_1)_j+v_2(e_2)_j+v_3(e_3)_j\right)}\right)e_j = 0$$
since there are the same number of facets with vertex $+e_j$ as there are with vertex $-e_j$. 
{For the $s=0$ rank~$2$ surface tensor notice that for $F = \conv\{v_1e_1,v_2e_2,v_3e_3\}$, we get $e_1^{\{j\}}(F) = v_j$, and $e_2^{\{j,k\}}(F) = v_jv_k$, since $(e_i)_j = 0$ unless $i=j$. Thus}
\begin{align*}
\Phi_2^{2,0}(P) & = 
\frac{2!}{4!\omega_1}\sum_{k=1}^3\sum_{j=1}^3 \sum_{F} \hspace{-0pt} V_{2}(F) {\left[ (-1)^42!\, e^{\{j\}}(F)e^{\{k\}}(F) + (-1)^31!\, e_2^{\{j,k\}}(F) \right]e_j\otimes e_k} \\
	& = { \frac{1}{4!}\sum_{j=1}^3\sum_{k=1}^3 \sum_{v\in\{\pm 1\}^3} \frac{1}{2}\left(2!v_jv_k - 1! v_jv_k\right)e_j\otimes e_k
	}\\
	& = \frac{1}{4!}\sum_{k=1}^3\sum_{j=1}^3 \sum_{v\in\{\pm 1\}^3}{v_jv_k\,}
e_j\otimes e_k \\
	& = \left(\begin{array}{ccc} 1/3 & 0 & 0 \\ 0 & 1/3 & 0 \\ 0 & 0 & 1/3 \end{array}\right).
\end{align*}
We can check this against the integral definition of $\Phi_2^{r,0}(P)$ by parametrizing each facet appropriately. 
\begin{align*}
\Phi_2^{r,0}(P) & = \frac{1}{\omega_1} \int_{\RR^3\times S^2}x^r \sum_F \int_F\int_{N(F,P)} \mathbf{1}(x,u)\HH^0(du)\HH^2(dx) \\
	& = \frac{1}{2} \sum_F \int_F x^r \HH^2(dx) \\
	& = \frac{1}{2} \left[ \int_0^1\int_0^{1-x_2} \left(\begin{array}{c} x_1 \\ x_2 \\ 1-x_1-x_2 \end{array}\right)^r dx_1dx_2 + \cdots \right] \\
	& \stackrel{r=1}{=} \frac{1}{2}\sum_{v\in\{\pm1\}^3} \frac{1}{6}\left(\begin{array}{c} v_1 \\ v_2 \\ v_3 \end{array}\right) \\
	&  = 0.
\end{align*} 
\end{example}

\noindent In Appendix~\ref{sec:sage}, there is another example of computating a complete surface tensor using Sage.

\section{Questions} \label{sec:q}

As we have seen, the generating function approach is an effective way of both parametrizing and computing Minkowski surface tensors, especially for simplicial polytopes. On the other hand, there are still several directions in which further work could yield results.

\begin{enumerate}
\item {\em Beyond surface tensors.}

Having now seen generating functions for volume and surface Minkowski tensors, a natural next step would be to consider Minkowski tensors for $j \leq d-2$. For polytopes, this would correspond to considering lower dimensional faces, instead of just facets. 

One of the key factors that enabled us to give a nice expression for our surface tensor generating function is that in the surface tensor calculation the translation invariant component (the $u^s$ tensor) is integrated only against a dirac measure. 
However, the calculation of $\Phi^{r,s}_{d-2}$ could also be feasible since it only requires the integration in the $u$ variables over some $1$-dimensional arc of $S^{d-1}$. 
Moreover, for other tensors the purely translation covariant cases $\Phi_j^{r,0}$ are also promising as these tensors will also correspond to the calculation of moments of the $j$th dimensional facets of the polytope (up to some constant factors).

\item {\em Computational aspects.}

We have applied the method of Section~\ref{sec:simplicial} to give explicit formulations of the surface tensors of simplicial polytopes. Of course, these methods can be applied to the general case of a polytope with simplicial facets, but the calculations quickly become impractical by hand when the adjoint becomes more complicated. 
In general, in order to express the generating function $MG_{P}(\bt)$, and thus the surface tensors for a polytope $P$, the {computation of surface adjoints should be automated as in Algorithm~\ref{ALG:adj} (see Appendix~\ref{sec:App}) using some computer algebra package, such as SAGE \cite{sage} or Macaulay2 \cite{M2}, that can compute with polytopes. }

{With an efficient implementation of the surface adjoint and corresponding generating function, one could then test other interesting polytopes. For example, one might look at associahedra, permutahedra, or other zonotopes. It would be nice to find other families of polytopes for which the generating function has relatively simple form and the tensors can be written explicitly as we've done in the simplicial case. 
}

{There is also the issue of choosing the best method of coefficient extraction for the generating functions. For simplicial polytopes, we have used differentiation as it allows us  to deduce by hand an explicit expression for the coefficients. It could, however, be more efficient computationally or for different families of polytopes to use other generating function techniques. }

\item {\em Parametrizing varieties.}

We note in Proposition~\ref{PROP:surface_adjoint_vanish} that the surface adjoint vanishes on the union of the nonface arrangements of the facets. A natural question would be to further explore the vanishing of this polynomial. What are the properties of the vanishing loci of the surface adjoint polynomial? Does there exist an analog to Theorem~\ref{THM:adjoint_unique} for the surface adjoint?

Finally, as noted in \cite{momentpaper}, from the map assigning to each polytope its adjoint polynomial one can define a moduli space of Wachspress varieties. It is natural then to investigate what the map assigning to each polytope its surface adjoint, $P\mapsto \alpha^s_P$, represents and whether there is a similar interpretation as defining some moduli space of varieties. 

\end{enumerate}

\appendix
{\color{black}
\section{Adjoints algorithmically}\label{sec:App}
For reference, we present the calculation of the surface adjoint algorithmically. In Algorithm~\ref{ALG:adj} we compute a list of the adjoints of each facet of a given polytope $P$. In Algorithm~\ref{ALG:den}, we compute the denominator of the surface tensor generating function for $P$. This is simply a product of the appropriate linear forms coming from the vertices of $P$, and thus it can also be used to calculate the denominator of each facet summand of $MG_P(\bt)$ individually whenever that is advantageous computationally. Finally, in Algorithm~\ref{ALG:sadj} we present the calculation of the surface adjoint, which uses the adjoints found in Algorithm~\ref{ALG:adj}. This is then the numerator of the generating function for surface tensors, as described in Section~\ref{sec:genfn}, and together with the output of Algorithm~\ref{ALG:den} describes $MG_P(\bt)$.

We note that we found it particularly useful computationally to keep the numerator and denominator separate when wanting to calculate tensors via derivatives, as in Section~\ref{sec:simplicial}.

\begin{algorithm}[h]
\SetAlgoLined
\KwData{A polytope $P$}
\KwResult{List of $(F, AdF)$ where $F$ is a facet and $AdF$ is its adjoint }
$VP\leftarrow \left\{ v: v  \in \mathcal{F}_{0}(P) \right\}$\;
$d\leftarrow$ (ambient) dimension of $P$\;
$ \textit{Facets}\leftarrow\left\{ F: F\in \mathcal{F}_{d-1}(P)   \right\}$\;
$FacetAdj\leftarrow\left\{  \right\}$\;
$R\leftarrow\mathbb{Q}[t_{i}\text{ for i in} \left[ d \right]]$\;
\For{$F \in \textit{Facets}$}{
  $VF\leftarrow\left\{ v \in VP: v  \in F \right\}$\;
  $VolF\leftarrow V_{\dim(F)}(F)$\;
  \eIf{F is a simplex}{
    $AdF\leftarrow 1$\;
    Add $(F, AdF)$ to $FacetAdj$
  }
  {$\textit{triang}\leftarrow$ a triangulation of $VF$ as sets of vertices\;
    $AdF\leftarrow 0$\;
    \For{$T\in \textit{triang}$}{
      $\textit{nonface}\leftarrow\left\{ v: v \in VF\setminus  \textit{T}\right\}$\;
      $\sigma\leftarrow\conv(T)$\;
      $Vol\sigma\leftarrow Vol_{\dim(\sigma)}(\sigma)$\;
      $\textit{ad}\leftarrow\frac{Vol\sigma}{VolF}\prod_{v\in\textit{nonface}}(1-\sum_{i\in\left[ d \right]} v_{i}t_{i})$\;
      $AdF\leftarrow AdF+ad$
    }
    Add $(F,AdF)$ to $FacetAdj$}
}
\KwRet{$FacetAdj$}
\caption{Algorithm for computing the facet adjoints}\label{ALG:adj}
\end{algorithm}

\begin{algorithm}[!h]
\SetAlgoLined
\KwData{A polytope $P$}
\KwResult{Denominator of $MG_{p}(\bt)$ }
$VP\leftarrow\left\{ v: v  \in \mathcal{F}_{0}(P) \right\}$\;
$d\leftarrow$ (ambient) dimension of $P$\;
$R\leftarrow\mathbb{Q}[t_{i}\text{ for i in} \left[ d \right]]$\;
$\textit{Den}=\prod_{v\in VP}(1-\sum_{i\in \left[ d \right]}v_{i}t_{i})$\;
\KwRet{$\textit{Den}$}
\caption{Algorithm for computing the denominator of surface generating function}\label{ALG:den}
\end{algorithm}

\begin{algorithm}[!h]
\SetAlgoLined
\KwData{A polytope $P$}
\KwResult{Surface adjoint polynomial $Ad:=\alpha_P^s$}
$VP\leftarrow\left\{ v: v  \in \mathcal{F}_{0}(P) \right\}$\;
$d\leftarrow$ (ambient) dimension of $P$\;
$R\leftarrow\mathbb{Q}[t_{i}\text{ for i in} \left[ d \right]]$\;
$Ad\leftarrow 0$\;
$FacetAdj\leftarrow$ list of facet adjoints from Algorithm~\ref{ALG:adj}\;
\For{$(F,AdF) \in \textit{FacetAdj}$}{
$VF\leftarrow\left\{ v \in VP: v  \in F \right\}$\;
$Vol\leftarrow V_{\dim F}(F)$\;
$\textit{FactorF}\leftarrow\prod_{v\in VP\setminus VF}(1-\sum_{i\in \left[ d \right]}v_{i}t_{i})$\;
$Ad\leftarrow Ad + Vol\cdot AdF \cdot FactorF$\;
}
\KwRet{$\textit{Ad}$}
\caption{Algorithm for computing the surface adjoint polynomial}\label{ALG:sadj}
\end{algorithm}
}

\subsection{Using Sage}\label{sec:sage}
In what follows we calculate the surface tensor $\Phi^{2,0}_1(P)$ for polytope $P=\conv\{(0,0),(a,0),(a,b),(0,b)\}$ of Example~\ref{EX:quad}. We calculate this tensor both from the definition and using Corollary~\ref{COR:simplicial_tensor_formula} in Sage~\cite{sage}.

Recall from the derivation of \eqref{EQ:poly_surface_tensors} we have
$$\Phi^{r,s}_1(P) = \frac{1}{r!s!\,\omega_{1+s}}\sum_{F\in\FF_{1}(P)}\left( \int_Fx^r\HH^{1}(dx) \right)\otimes u_F^s, $$
which for our quadrilateral with $r=2, s=0$ becomes
\begin{align*}
\Phi^{2,0}_1(P) & = 	\frac{1}{2!0!\,\omega_{1}}\sum_{F\in\FF_{1}(P)}\left( \int_Fx^2\HH^{1}(dx) \right)\otimes u_F^0	\\
	& = \frac{1}{2!0!\,\omega_{1}}\left( \int_{(0,0)}^{(a,0)} x^2 \lambda(dx) + \int_{(a,0)}^{(a,b)} x^2 \lambda(dx) + \int_{(0,b)}^{(a,b)} x^2 \lambda(dx) +\int_{(0,0)}^{(0,b)} x^2 \lambda(dx) \right)
\end{align*}
which we calculate below.

\begin{lstlisting}
sage: #define facets of P
sage: a = var('a')
sage: b = var('b')
sage: assume(a>0)
sage: assume(b>0)
sage: F = [[[0,0],[a,0]],[[a,0],[a,b]],[[0,b],[a,b]],[[0,0],[0,b]]]
sage: r = 2, s = 0 #surface tensor rank 2+0
sage: omega1 = 2
sage: x = var('x')
sage: y = var('y')
sage: xx = matrix([x,y])
sage: T = xx.transpose()*xx #tensor x^2
sage: T
[x^2 x*y]
[x*y y^2]
sage: #integrate tensor T over each facet
sage: intF = [0,0,0,0]
sage: intF[0] = matrix(r, lambda i,j: integrate(T[i,j],x,0,a))
sage: intF[0] = intF[0].substitute({y:0}) 
sage: intF[1] = matrix(r, lambda i,j: integrate(T[i,j],y,0,b))
sage: intF[1] = intF[1].substitute({x:a}) 
sage: intF[2] = matrix(r, lambda i,j: integrate(T[i,j],x,0,a))
sage: intF[2] = intF[2].substitute({y:b}) 
sage: intF[3] = matrix(r, lambda i,j: integrate(T[i,j],y,0,b))
sage: intF[3] = intF[3].substitute({x:0}) 
sage: #each facet term is tensored with u_F^0, so remains as above
sage: Phi = 0
sage: for f in intF:
....:    Phi = Phi + f/(factorial(r)*factorial(s)*omega1)
sage: Phi
[  1/6*a^3 + 1/4*a^2*b 1/8*a^2*b + 1/8*a*b^2]
[1/8*a^2*b + 1/8*a*b^2   1/4*a*b^2 + 1/6*b^3]
\end{lstlisting} \vspace{5pt}

{Next we calculate the same tensor using Corollary~\ref{COR:simplicial_tensor_formula}. The function \texttt{elem(I,x)} calculates the doubly indexed elementary symmetric function $e_{|I|}^I(x)$ of Definition~\ref{DEF:doub_elem_sym} for some set of variables $x = \{x_1,\ldots, x_d\}$. The function \texttt{multisetPartitions(I,k,d)} calculates the collection of sets $I_1,\ldots, I_k$ that partition index set $I$ with each $|I_j|\leq d$, over which the we sum to get the $c_I$ of Corollary~\ref{COR:simplicial_tensor_formula}. Finally the function \texttt{c(I,F,d,s)} calculates the coefficient $c_I$ for index set $I=\{i_1\leq\cdots\leq i_r\}$ and set of facets $F$ with $d$ being the dimension of polytope $P$. }

{We use these functions to calculate the four components of the tensor $\Phi^{2,0}_1(P)$ coming from $I = \{i_1,i_2\} \in \{(1,1),(1,2),(2,2)\}$. Notice that by symmetry, the set $I = (1,2)$, which corresponds to the tensor $e_1^{1}\otimes e_2^{1}$, indexes a coefficient that will be divided into two entries of the tensor array.}
{
\begin{remark} Since Sage indexes from 0, all our index sets in the following are $-1$ from what is stated in the text.
\end{remark}
\begin{remark} Only the definition of \texttt{c(I,F,d,s)} below depends on our particular choice of $P$. This is only because Sage will not calculate the symbolic volume of a facet. However, give a concrete realization of some polytope, one can simply replace lines 60--62 with the calculation of the facet F as a Sage polyhedron.
\end{remark}
} 
\begin{lstlisting}
sage: def elem(I,x):
....:    #double indexed elementary symmetric function
....:    #indices I, variables x
....:    k = len(I) 
....:    d = len(x)
....:    e = 0
....:    for p in SymmetricGroup([i for i in range(d)]):
....:        e = e + product([x[p(j)][I[j]] for j in range(k)])
....:    return e/(factorial(d-k))
sage: def multisetPartitions(I,k,d):
....:    #partitions of the multiset I with k parts 
....:    #each part of size at most d
....:    n = len(I)
....:    P = []
....:    for p in SetPartitions(n,k):
....:        if max(list(len(s) for s in p)) <= d:
....:            P.append(list(list(I[j-1] for j in s) for s in p))
....:    return P
sage: def c(I,F,d,s):
....:    #coefficient c_I of tensor as in Corollary 5.6, 
....:    #using list of facets F for polytope in dimension d
....:    c = 0
....:    r = len(I)
....:    nF = len(F)
....:    u = list(var('u_%i' % i) for i in range(nF))
....:    for i in range(nF):
....:        #In this instance each facet is just a line segment
....:        V = list(F[i][1][j] - F[i][0][j] for j in range(2))
....:        V.remove(0)
....:        V = V[0]
....:        for k in range(1,r+1):
....:            cF = sum((-1)^(k+r)*factorial(k)*
....:                     product(elem(P[j],F[i]) for j in range(k)) 
....:                     for P in multisetPartitions(I,k,d))
....:            c = c + V*(u[i]^s)*cF
....:    return c
sage: comp = dict();
sage: for i1 in range(2):
....:    for i2 in range(i1,2):
....:        a = [[i1,i2].count(0),[i1,i2].count(1)]
....:        comp[i1,i2] = c([i1,i2],F,2,0)/(
....:            factorial(a[0])*factorial(a[1])*factorial(r+1)
....:            *factorial(s)*omega1)
sage: PhiAlt = matrix([[comp[0,0],comp[0,1]/2],[comp[0,1]/2,comp[1,1]]])
sage: PhiAlt
[  1/6*a^3 + 1/4*a^2*b 1/8*a^2*b + 1/8*a*b^2]
[1/8*a^2*b + 1/8*a*b^2   1/4*a*b^2 + 1/6*b^3]
sage: Phi == PhiAlt
True
\end{lstlisting} \vspace{5pt}

\noindent Finally, in line 80, we see that as expected the computations give the same result in both cases.

\newpage
\bibliographystyle{plain}
\bibliography{MT}

\begin{thebibliography}{10}

\bibitem{Alesker2}
S.~Alesker.
\newblock {Continuous rotation invariant valuations on convex sets}.
\newblock 1999.

\bibitem{Alesker1}
S.~Alesker.
\newblock {Description of Continuous Isometry Covariant Valuations on Convex
  Sets}.
\newblock {\em Geometriae Dedicata}, 74(3):241–248, 1999.

\bibitem{Beisbart_2006}
C.~Beisbart, M.~S. Barbosa, H.~Wagner, and L.~da~F. Costa.
\newblock Extended morphometric analysis of neuronal cells with minkowski
  valuations.
\newblock {\em The European Physical Journal B}, 52(4):531–546, Aug 2006.

\bibitem{M2}
D.~R. Grayson and M.~E. Stillman.
\newblock Macaulay2, a software system for research in algebraic geometry.
\newblock Available at \url{http://www.math.uiuc.edu/Macaulay2/}.

\bibitem{Voronoi}
D.~Hug, M.~Kiderlen, and A.~M. Svane.
\newblock {Voronoi-Based Estimation of Minkowski Tensors from Finite Point
  Samples}.
\newblock {\em Discrete and Computational Geometry}, 57(3):545–570, Jan 2017.

\bibitem{momentpaper}
K.~Kohn, B.~Shapiro, and B.~Sturmfels.
\newblock {Moment Varieties of Measures on Polytopes}.
\newblock 2018.

\bibitem{Thesis}
A.~Kousholt.
\newblock Reconstruction of n-dimensional convex bodies from surface tensors.
\newblock {\em Advances in Applied Mathematics}, 83:115–144, Feb 2017.

\bibitem{McMullen_1975}
P.~McMullen.
\newblock Non-linear angle-sum relations for polyhedral cones and polytopes.
\newblock {\em Mathematical Proceedings of the Cambridge Philosophical
  Society}, 78(2):247–261, Sep 1975.

\bibitem{McMullen_1997}
P.~McMullen.
\newblock Isometry covariant valuations on convex bodies.
\newblock {\em Supplemento Ai Rendiconti Circ Mat Palermo, Series II}, Suppl.
  50:259–271, 1997.

\bibitem{Schroder_Turk_2011}
G.~E. Schr{\"o}der-Turk, W.~Mickel, S.~C. Kapfer, M.~A. Klatt, F.~M. Schaller,
  M.~J.~F. Hoffmann, N.~Kleppmann, P.~Armstrong, A.~Inayat, D.~Hug, and et~al.
\newblock Minkowski tensor shape analysis of cellular, granular and porous
  structures.
\newblock {\em Advanced Materials}, 23(22-23):2535–2553, Jun 2011.

\bibitem{Aniso}
G.~E. Schr{\"o}der-Turk, W.~Mickel, S.~C. Kapfer, F.~M. Schaller,
  B.~Breidenbach, D.~Hug, and K.~Mecke.
\newblock Minkowski tensors of anisotropic spatial structure.
\newblock {\em New Journal of Physics}, 15(8):083028, Aug 2013.

\bibitem{sage}
W.\thinspace{}A. Stein et~al.
\newblock {\em {S}age {M}athematics {S}oftware ({V}ersion 9.1}.
\newblock The Sage Development Team, 2020.
\newblock {\tt http://www.sagemath.org}.

\bibitem{Steiner}
J.~Steiner.
\newblock {\"U}ber parallele fl{\"a}chen.
\newblock {\em Jacob Steiner’s Gesammelte Werke}, page 171–176.

\bibitem{Wachspress}
E.~L. Wachspress.
\newblock {A Rational Finite Element Basis}.
\newblock {\em Mathematics in Science and Engineering}, 1975.

\bibitem{Warren_1996}
J.~Warren.
\newblock Barycentric coordinates for convex polytopes.
\newblock {\em Advances in Computational Mathematics}, 6(1):97–108, Dec 1996.

\end{thebibliography}

\Addresses

\end{document}